\documentclass[12pt]{article}
\pdfoutput=1
\usepackage{geometry}
\geometry{verbose,tmargin=1in,bmargin=1in,lmargin=1in,rmargin=1in}
\usepackage{array}
\usepackage{mathtools}
\usepackage{amsmath}
\usepackage{amsthm}
\usepackage{amssymb}
\usepackage{diagbox}

\makeatletter

\providecommand{\tabularnewline}{\\}

\numberwithin{equation}{section}
\theoremstyle{plain}
\newtheorem{thm}{\protect\theoremname}[section]
\theoremstyle{plain}
\newtheorem{prop}[thm]{\protect\propositionname}
\theoremstyle{plain}
\newtheorem{lem}[thm]{\protect\lemmaname}
\theoremstyle{remark}
\newtheorem{rem}[thm]{\protect\remarkname}
\theoremstyle{plain}
\newtheorem{cor}[thm]{\protect\corollaryname}

\usepackage{bm}
\usepackage{tikz}
\usetikzlibrary{arrows}
\usepackage{hyperref}
\usepackage{colortbl}
\usepackage{xparse}
\usepackage{caption}
\usepackage{microtype}

\usepackage{titlesec}
\titleformat{\section}
  {\normalfont\large\bfseries}{\thesection.}{.8ex}{} 
\titleformat{\subsection}
  {\normalfont\bfseries}{\thesubsection.}{.8ex}{} 

\usetikzlibrary{shapes}
\tikzstyle{pathdefault}=[draw, line width=1, solid, color=black]
\tikzstyle{nodedefault}=[circle, inner sep=1.1, fill=black]
\tikzstyle{empty}=[]
\tikzstyle{nodeellipsis}=[circle, inner sep=0.5, fill=black]
\tikzstyle{pathcolor1}=[draw, line width=1.3, densely dashed, color=red]
\tikzstyle{pathcolor2}=[draw, line width=1.6, densely dotted, color=blue]
\tikzstyle{pathcolorlight}=[draw, line width=1, dotted, color=lightgray]
\tikzstyle{arbpathcolor0}=[line width=1, dashdotted, color=black]
\tikzstyle{arbpathcolor1}=[line width=1, densely dashed, color=red]
\tikzstyle{arbpathdefault}=[line width=1, densely dotted, color=blue]

\newcounter{id}
\newcommand{\drawlinedotswithstyle}[4]{
 \def\x{{#3}}
 \def\y{{#4}}
 \tikzstyle{thispathstyle}=[#1]
 \tikzstyle{thisnodestyle}=[#2]
 \setcounter{id}{-1} 
 \foreach \j in {#3}{\stepcounter{id}} 
 \foreach \i in {1,...,\the\value{id}}{  
  \path[thispathstyle] (\x[\i],\y[\i]) --(\x[\i-1],\y[\i-1]); 
 }
 \foreach \i in {1,...,\the\value{id}}{  
  \node[thisnodestyle] at (\x[\i],\y[\i]) {}; 
 }
 \node[thisnodestyle] at (\x[0],\y[0]) {}; 
}

\DeclareDocumentCommand{\drawlinedots}{ O{pathdefault} O{nodedefault} m m}{\drawlinedotswithstyle{#1}{#2}{#3}{#4}}
\DeclareDocumentCommand{\drawlinedotsred}{ O{pathcolor1} O{nodedefault} m m}{\drawlinedotswithstyle{#1}{#2}{#3}{#4}}

\let\originalleft\left
\let\originalright\right
\renewcommand{\left}{\mathopen{}\mathclose\bgroup\originalleft}
\renewcommand{\right}{\aftergroup\egroup\originalright}

\newcommand{\leqnomode}{\tagsleft@true\let\veqno\@@leqno}
\newcommand{\reqnomode}{\tagsleft@false\let\veqno\@@eqno}
\reqnomode

\makeatother

\usepackage{babel}
\providecommand{\corollaryname}{Corollary}
\providecommand{\lemmaname}{Lemma}
\providecommand{\propositionname}{Proposition}
\providecommand{\remarkname}{Remark}
\providecommand{\theoremname}{Theorem}

\flushbottom

\usepackage{authblk}
\title{A subfamily of skew Dyck paths related to $k$-ary trees}
\author[1]{Yuxuan Zhang\thanks{\tt{yz885@duke.edu}}} 
\author[2]{Yan Zhuang\thanks{\tt{yazhuang@davidson.edu}}}
\affil[1]{Department of Statistical Science, Duke University} 
\affil[2]{Department of Mathematics and Computer Science, Davidson College}
\date{\today}

\begin{document}

\maketitle
\begin{abstract}
We introduce a subfamily of skew Dyck paths called \textit{box paths} and show that they are in bijection with pairs of ternary trees, confirming an observation stated previously on the On-Line Encyclopedia of Integer Sequences. More generally, we define \textit{$k$-box paths}, which are in bijection with $(k+1)$-tuples of $(k+2)$-ary trees. A bijection is given between $k$-box paths and a subfamily of $k_{t}$-Dyck paths, as well as a bijection with a subfamily of $(k,\ell)$-threshold sequences. We also study the refined enumeration of $k$-box paths by the number of returns and the number of long ascents. Notably, the distribution of long ascents over $k$-box paths generalizes the Narayana distribution on Dyck paths, and we find that $(k-3)$-box paths with exactly two long ascents provide a combinatorial model for the second $k$-gonal numbers.
\end{abstract}
\textbf{\small{}Keywords: }{\small{}skew Dyck paths, $k$-ary trees, $k_{t}$-Dyck paths, threshold sequences, Narayana numbers, second $k$-gonal numbers
}{\let\thefootnote\relax\footnotetext{2020 \textit{Mathematics Subject Classification}. Primary 05A15; Secondary 05A10, 05A19.}}

\section{Introduction}

A \textit{lattice path} is a path in the integer lattice $\mathbb{Z}^{n}$ consisting of a sequence of steps from a prescribed step set and satisfying prescribed restrictions. Perhaps the most well-studied among classical lattice paths are Dyck paths. A \textit{Dyck path} of \textit{semilength} $n$ is a path in $\mathbb{Z}^{2}$\textemdash consisting of $2n$ steps from the step set $\{(1,1),(1,-1)\}$\textemdash which starts at the origin, ends on the $x$-axis, and never traverses below the $x$-axis. The steps $(1,1)$ are called \textit{up steps} and are represented by the letter $U$, and the $(1,-1)$ are called \textit{down steps} and represented by $D$. It is well known that the number of Dyck paths of semilength $n$ is the $n$th Catalan number $C_{n}={2n \choose n}/(n+1)$.

In 2010, Deutsch, Munarini, and Rinaldi \cite{Deutsch2010} introduced a generalization of Dyck paths called \textit{skew Dyck paths}, which are defined in the same way except that we also allow \textit{left steps} $L=(-1,-1)$ as long as up steps never intersect with left steps. (In other words, we cannot have an up step followed by a left step, or a left step followed by an up step.) Thus, Dyck paths are precisely skew Dyck paths without left steps. 

Lattice paths with step set $S$ can be represented as words on the alphabet $S$ satisfying appropriate restrictions. In particular, we represent skew Dyck paths as words $w$ over the alphabet $\{U,D,L\}$ such that $w$ avoids the contiguous subwords $UL$ and $LU$, the number of $U$s in $w$ is equal to the total number of $D$s and $L$s, and the total number of $D$s and $L$s in any prefix of $w$ is never more than the number of $U$s in that prefix. See Figure \ref{f-sdp} for an example. We use superscript notation to represent consecutive occurrences of the same step, e.g., $U^{3}=UUU$. We refer to contiguous subwords as \textit{factors}, and whenever we refer to a factor in a lattice path, we really mean a factor inside the corresponding word.
\begin{figure}
\noindent \begin{centering}
\begin{tikzpicture}[scale=0.6] 
\draw [line width=0] (4,2); 
\draw[pathcolorlight] (0,0) -- (9,0); 
\drawlinedots{0,1,2,3,4,5,6,7,8,7,6,7,8,9,8}{0,1,0,1,2,3,4,5,4,3,2,1,2,1,0}
\end{tikzpicture}
\hspace{0.5cm}
\par\end{centering}
\caption{\label{f-sdp}The skew Dyck path corresponding to the word $UDU^{5}DL^{2}DUDL$}
\end{figure}

Within lattice path enumeration, there is interest in counting paths by the number of occurrences of certain factors; see \cite{Deutsch1999} for an early work on this topic. For example, the number of Dyck paths of semilength $n$ with exactly $j$ $UD$-factors (a.k.a. \textit{peaks}) is equal to the \textit{Narayana number}
\[
N_{n,j}\coloneqq\frac{1}{n}{n \choose j}{n \choose j-1}=\frac{1}{j}{n-1 \choose j-1}{n \choose j-1}.
\]

Our present work concerns $UDL$-factors in skew Dyck paths. The array of numbers counting skew Dyck paths by semilength and number of $UDL$-factors is given in OEIS entry A128728 \cite{oeis}. Recently, Prodinger \cite{Prodinger} derived the functional equation
\begin{equation}
x^{2}R^{3}-x(2-x)R^{2}+(1-x^{2})R-1+x+x^{2}-tx^{2}=0\label{e-prod}
\end{equation}
for the bivariate generating function $R=R(t,x)$ whose coefficient of $t^{k}x^{n}$ is the number of skew Dyck paths of semilength $n$ with $k$ $UDL$s. With a computer algebra system such as Maple, one may use Equation (\ref{e-prod}) to compute a (truncated) series expansion for $R$, from which it appears that the coefficient of $t^{n}x^{3n-1}$ is equal to ${3n-1 \choose n}/(3n-1)$ for all $n\geq1$. This observation had previously been reported on the OEIS page for A128728 with the following comment:
\begin{center}
\texttt{Apparently, T(3k-1,k) = binomial(3k-1,k)/(3k-1).}
\par\end{center}

The starting point of this paper was the first author's undergraduate honors thesis at Davidson College (supervised by the second author), which (among other things) sought to provide an explanation for this observation. It can be shown that among skew Dyck paths with exactly $n$ $UDL$s, the shortest paths have semilength $3n-1$. We call such paths \textit{box}\textit{ paths} of \textit{size} $n$. (Visually, a $UDL$-factor in a skew Dyck path is reminiscient of a box protruding from a down-slope of the path.) See Figure \ref{f-box3} for all box paths of size 3. 

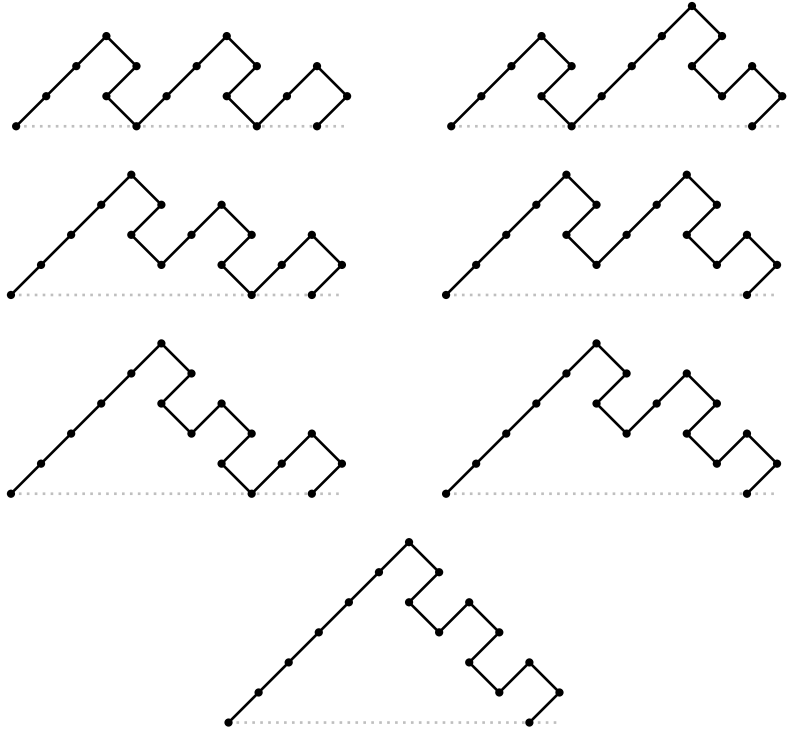
\begin{figure}
\noindent \begin{centering}
\begin{tikzpicture}[scale=0.4] 
\draw [line width=0] (4,2); 
\draw[pathcolorlight] (0,0) -- (11,0); 
\drawlinedots{0,1,2,3,4,3,4,5,6,7,8,7,8,9,10,11,10}{0,1,2,3,2,1,0,1,2,3,2,1,0,1,2,1,0}
\end{tikzpicture}
\hspace{1cm}
\begin{tikzpicture}[scale=0.4] 
\draw [line width=0] (4,2); 
\draw[pathcolorlight] (0,0) -- (11,0); 
\drawlinedots{0,1,2,3,4,3,4,5,6,7,8,9,8,9,10,11,10}{0,1,2,3,2,1,0,1,2,3,4,3,2,1,2,1,0}
\end{tikzpicture}

\vspace{0.5cm}

\begin{tikzpicture}[scale=0.4] 
\draw[pathcolorlight] (0,0) -- (11,0); 
\drawlinedots{0,1,2,3,4,5,4,5,6,7,8,7,8,9,10,11,10}{0,1,2,3,4,3,2,1,2,3,2,1,0,1,2,1,0}
\end{tikzpicture}
\hspace{1cm}
\begin{tikzpicture}[scale=0.4] 
\draw [line width=0] (4,2); 
\draw[pathcolorlight] (0,0) -- (11,0); 
\drawlinedots{0,1,2,3,4,5,4,5,6,7,8,9,8,9,10,11,10}{0,1,2,3,4,3,2,1,2,3,4,3,2,1,2,1,0}
\end{tikzpicture}

\vspace{0.5cm}

\begin{tikzpicture}[scale=0.4] 
\draw[pathcolorlight] (0,0) -- (11,0); 
\drawlinedots{0,1,2,3,4,5,6,5,6,7,8,7,8,9,10,11,10}{0,1,2,3,4,5,4,3,2,3,2,1,0,1,2,1,0}
\end{tikzpicture}
\hspace{1cm}
\begin{tikzpicture}[scale=0.4] 
\draw[pathcolorlight] (0,0) -- (11,0); 
\drawlinedots{0,1,2,3,4,5,6,5,6,7,8,9,8,9,10,11,10}{0,1,2,3,4,5,4,3,2,3,4,3,2,1,2,1,0}
\end{tikzpicture}

\vspace{0.5cm}

\begin{tikzpicture}[scale=0.4] 
\draw[pathcolorlight] (0,0) -- (11,0); 
\drawlinedots{0,1,2,3,4,5,6,7,6,7,8,9,8,9,10,11,10}{0,1,2,3,4,5,6,5,4,3,4,3,2,1,2,1,0}
\end{tikzpicture}
\par\end{centering}
\caption{\label{f-box3}All box paths of size 3}

\end{figure}

We wish to prove that there are ${3n-1 \choose n}/(3n-1)$ box paths of size $n$. The numbers ${3n-1 \choose n}/(3n-1)$ have several established combinatorial interpretations, including pairs of ternary trees with $n-1$ total nodes. Thus, it suffices to show that box paths and pairs of ternary trees are in bijection. 

More generally, define a \textit{$k$-box path} of \textit{size} $n$ to be a skew Dyck path of semilength $(k+2)n-1$ with exactly $n$ $UD^{k}L$-factors (so that box paths are 1-box paths). We give a recursive bijection between $k$-box paths and $(k+1)$-tuples of $(k+2)$-ary trees; this reduces to the above result for $k=1$. While one cannot have a $UL$-factor in a skew Dyck path, we describe a way to make sense of a ``0-box path'' as being essentially a Dyck path, so the case $k=0$ of our result recovers the well-known fact that Dyck paths are in bijection with binary trees. In this way, we can think of $k$-box paths as being a generalization of Dyck paths. 

This paper is organized as follows. We begin with some preliminary definitions in Section \ref{sec-2}. Next, we show in Section \ref{sec-3} that $k$-box paths of size $n$ are in bijection with $(k+1)$-tuples of $(k+2)$-ary trees with $n-1$ total nodes, the number of which is given by the formula
\[
\frac{1}{(k+2)n-1}{(k+2)n-1 \choose n}.
\]
These numbers also count $(k+1)_{k}$-Dyck paths of size $n-1$ and $(k+2,k)$-threshold sequences of length $n-1$, which are subfamilies of the $k_{t}$-Dyck paths introduced by Selkirk \cite{Selkirk2019} and of the $(k,\ell)$-threshold sequences introduced by Rusu \cite{Rusu2022}, respectively. In Section \ref{s-selkirk}, we give a simple bijection between $k$-box paths and $(k+1)_{k}$-Dyck paths, and in Section \ref{s-rusu}, we give one between $k$-box paths and $(k+2,k)$-threshold sequences. Finally, in Sections \ref{sec-6}\textendash \ref{sec-7}, we study the distribution of two statistics over $k$-box paths\textemdash the number of returns and the number of long ascents\textemdash and give an exact enumeration formula for each. The latter distribution generalizes the Narayana distribution on Dyck paths, and curiously, $k$-box paths with exactly two long ascents are shown to be counted by the second $(k+3)$-gonal numbers.

\section{Preliminaries: \texorpdfstring{$k$}{k}-ary trees and \texorpdfstring{$k$}{k}-Dyck paths} \label{sec-2}

A \textit{ternary tree} is a rooted tree in which each node has at most three children, which are distinguished as the left child, the middle child, and the right child; see Figure \ref{f-ternary} for an example. More generally, a $k$\textit{-ary tree} is a tree in which each node has at most $k$ children, all of which are distinguished in the analogous way. 
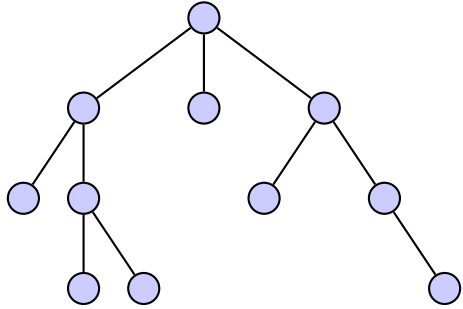
\begin{figure}
\noindent \begin{centering}
\begin{center}
\begin{tikzpicture}[scale=0.4,auto,   thick,root/.style={circle,fill=blue!20,draw,font=\sffamily,minimum size=1em},left/.style={circle,fill=blue!20,draw,font=\sffamily,minimum size=1em},mid/.style={circle,fill=blue!20,draw,font=\sffamily,minimum size=1em},right/.style={circle,fill=blue!20,draw,font=\sffamily,minimum size=1em}]

\node[root] (1) at (10,10) {};
\node[left] (2) at (6,7) {};
\node[mid] (3) at (10,7) {};
\node[right] (4) at (14,7) {};
\node[left] (5) at (4,4) {};
\node[left] (6) at (6,4) {};
\node[right] (7) at (12,4) {};
\node[right] (8) at (16,4) {};
\node[left] (9) at (6,1) {};
\node[left] (10) at (8,1) {};
\node[right] (11) at (18,1) {};

\foreach \from/\to in {1/2,1/3,1/4,2/5,2/6,4/7,4/8,6/9,6/10,8/11}
\draw (\from) -- (\to);

\end{tikzpicture}
\end{center}
\par\end{centering}
\caption{\label{f-ternary}A ternary tree with 11 nodes}
\end{figure}

Let ${\cal T}_{n}^{(k)}$ be the set of $k$-ary trees with $n$ nodes, and let 
\[
C_{k}=C_{k}(x)\coloneqq\sum_{n=0}^{\infty}\left|{\cal T}_{n}^{(k)}\right|x^{n}
\]
be the ordinary generating function for $k$-ary trees. Note that a $k$-ary tree is either empty or can be recursively decomposed as a $k$-tuple of (possibly empty) subtrees, all of which are attached to the root. This leads to the functional equation
\[
C_{k}=1+xC_{k}^{k},
\]
from which the formulas
\begin{equation}
\left|{\cal T}_{n}^{(k)}\right|=[x^{n}]\,C_{k}=\frac{1}{n}{kn \choose n-1}=\frac{1}{kn+1}{kn+1 \choose n}\label{e-gencat}
\end{equation}
and 
\begin{equation}
[x^{n}]\,C_{k}^{r}=\frac{r}{n}{kn+r-1 \choose n-1}=\frac{r}{kn+r}{kn+r \choose n}\label{e-rthpower}
\end{equation}
follow from a straightforward application of Lagrange inversion \cite{Gessel2016}. Note that the numbers in (\ref{e-rthpower}) count $r$-tuples of $k$-ary trees with a total of $n$ nodes, and these numbers are called
(\textit{generalized}) \textit{Fuss\textendash Catalan} \textit{numbers} and specialize to the Catalan numbers at $r=1$ and $k=2$.

To make the connection between our skew Dyck paths and $k$-ary trees, it is easier to work with another family of lattice paths which are in bijection with $k$-ary trees. Given a positive integer $k$, a $k$-\textit{Dyck path} of \textit{size} $n$ is a path in $\mathbb{Z}^{2}$\textemdash with steps $U=(1,1)$ and $D^{(k)}=(k,-k)$\textemdash that begins at the origin $(0,0)$, ends at $(2kn,0)$, and never traverses below the $x$-axis. Notice that we recover ordinary Dyck paths by setting $k=1$, and that the size of a $k$-Dyck path is equal to its number of $D^{(k)}$ steps. 

Every $k$-Dyck path $\mu$ is either the empty path or can be uniquely decomposed in the form 
\[
\mu=U\mu_{1}U\mu_{2}U\mu_{3}\cdots U\mu_{k}D^{(k)}\mu_{k+1}
\]
where $\mu_{1},\dots,\mu_{k+1}$ are (possibly empty) $k$-Dyck paths; this is a generalization of the ``first return decomposition'' for ordinary Dyck paths. Because the decomposition of $k$-ary trees into subtrees is of the same form, we have a recursive bijection between $k$-Dyck paths and $(k+1)$-ary trees.

Given $k\geq2$, let us call a skew Dyck path an \textit{augmented k-Dyck path} of \textit{size} $n$ if it can be obtained by taking a $k$-Dyck path of size $n$ and replacing each $D^{(k)}$ step with a factor $UD^{k-1}LD$. See Figure \ref{f-dyckaug} for an example. Clearly, an augmented $k$-Dyck path cannot have any $D$ or $L$ steps except within a $UD^{k-1}LD$-factor, so augmented $k$-Dyck paths of size $n$ are precisely the skew Dyck paths of the form 
\[
U^{a_{1}}D^{k-1}LDU^{a_{2}}D^{k-1}LD\cdots U^{a_{n}}D^{k-1}LD
\]
where $a_{1},a_{2},\dots,a_{n}$ are positive integers. Let ${\cal \hat{D}}_{n}^{(k)}$ be the set of augmented $k$-Dyck paths of size $n$, and ${\cal \hat{D}}^{(k)}\coloneqq\bigsqcup_{n\geq0}{\cal \hat{D}}_{n}^{(k)}$.
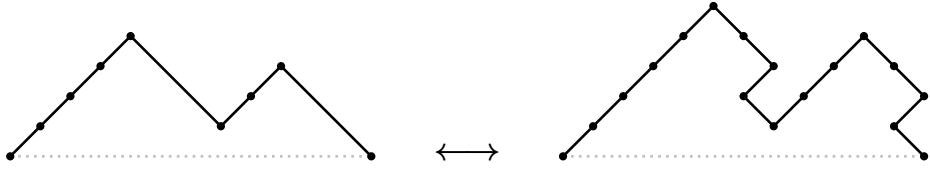
\begin{figure}
\noindent \begin{centering}
\begin{tikzpicture}[scale=0.4] 
\draw [line width=0] (4,2); 
\draw[pathcolorlight] (0,0) -- (12,0); 
\drawlinedots{0,1,2,3,4,7,8,9,12}{0,1,2,3,4,1,2,3,0}
\end{tikzpicture}
\hspace{0.5cm}
$\boldsymbol{\longleftrightarrow}$
\hspace{0.5cm}
\begin{tikzpicture}[scale=0.4] 
\draw [line width=0] (4,2); 
\draw[pathcolorlight] (0,0) -- (12,0); 
\drawlinedots{0,1,2,3,4,5,6,7,6,7,8,9,10,11,12,11,12}{0,1,2,3,4,5,4,3,2,1,2,3,4,3,2,1,0}
\end{tikzpicture}
\par\end{centering}
\caption{\label{f-dyckaug}A $3$-Dyck path and the corresponding augmented
$3$-Dyck path}
\end{figure}

\section{\texorpdfstring{$k$}{k}-box paths and their enumeration} \label{sec-3}

Recall that we defined a box path of size $n$ to be a skew Dyck path of semilength $3n-1$ with exactly $n$ $UDL$-factors. More generally, define a \textit{$k$-box path} of \textit{size} $n$ to be a skew Dyck path of semilength $(k+2)n-1$ with exactly $n$ $UD^{k}L$-factors. The following proposition shows that $k$-box paths of size $n$ are the shortest skew Dyck paths with $n$ $UD^{k}L$-factors.

\begin{prop}
\label{p-UDLmin}
Among skew Dyck paths with exactly $n$ $UD^{k}L$-factors, the paths of smallest semilength are of semilength $(k+2)n-1$.
\end{prop}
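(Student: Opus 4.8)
The plan is to prove this in two directions: first that no skew Dyck path with exactly $n$ occurrences of $UD^{k}L$ can have semilength smaller than $(k+2)n-1$, and then that this value is actually attained. Throughout I will use that the semilength of a skew Dyck path equals its number of up steps. The starting observation is that the $n$ occurrences of $UD^{k}L$ are pairwise disjoint: every letter strictly inside such a factor is a $D$ or an $L$, so two occurrences cannot overlap. This lets me write $w=S_{0}F_{1}S_{1}F_{2}\cdots F_{n}S_{n}$, where $F_{1},\dots,F_{n}$ are the occurrences read left to right and $S_{0},\dots,S_{n}$ are the (possibly empty) gaps between them. Each $F_{i}$ contributes exactly one up step, so if $m$ denotes the semilength, then the gaps together contain $m-n$ up steps, and the whole problem reduces to bounding the number of up steps in the gaps below by $nk+n-1$.

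A first, naive count is not enough. Since each $F_{i}$ also contributes $k+1$ down steps (the $D^{k}L$ part), comparing total up steps with total down steps yields only $m\ge n(k+1)$, which falls short of the target by exactly $n-1$. To recover these missing up steps I would track the height profile together with the adjacency restrictions. Writing $h_{i}$ for the height just before $F_{i}$, nonnegativity forces $h_{i}\ge k$, since $F_{i}$ ends $k$ lower than it begins. The decisive point is the avoidance of $LU$: for each interior index $1\le i\le n-1$, the trailing $L$ of $F_{i}$ cannot be immediately followed by the leading $U$ of $F_{i+1}$, so the gap $S_{i}$ is nonempty and must begin with a down step. Hence the height just after $F_{i}$ is at least $1$, forcing $h_{i}\ge k+1$ for every interior $i$ and charging each interior gap with at least one extra down step beyond those inside the factors.

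With these facts I would estimate the up steps gap by gap. The gap $S_{0}$ climbs from height $0$ to $h_{1}$, hence contains at least $h_{1}$ up steps, while each interior gap $S_{i}$ climbs from $h_{i}-k$ to $h_{i+1}$ and contains at least one down step, so (since up minus down equals the net change) it contains at least $h_{i+1}-h_{i}+k+1$ up steps. Summing and telescoping gives
\[
\#\{\text{up steps in gaps}\}\;\ge\;h_{1}+\sum_{i=1}^{n-1}\bigl(h_{i+1}-h_{i}+k+1\bigr)\;=\;h_{n}+(n-1)(k+1)\;\ge\;nk+n-1,
\]
using $h_{n}\ge k$. Adding the $n$ up steps inside the factors yields $m\ge(k+2)n-1$. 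For the matching upper bound I would exhibit a path meeting it, for instance $U^{k+2}D^{k}L\,(DU^{k+2}D^{k}L)^{n-2}\,DU^{k+1}D^{k}L$ (with the evident simplification for small $n$; for $k=1,n=3$ this is one of the box paths in Figure \ref{f-box3}), and check directly that it is a valid skew Dyck path with exactly $n$ occurrences of $UD^{k}L$ and semilength $(k+2)n-1$.

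I expect the main obstacle to be precisely the accounting that produces the extra $n-1$ up steps: a purely arithmetic balance of up against down steps, or one using only height-nonnegativity, gives merely $m\ge n(k+1)$, and it is the $LU$-avoidance—an adjacency condition rather than a global balance condition—that supplies the missing steps. Care is also needed to confirm that these forced down steps are genuinely distinct from the ones counted inside the factors, which is the reason for separating $w$ cleanly into factors $F_{i}$ and gaps $S_{i}$ at the outset.
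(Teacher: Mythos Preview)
Your proof is correct. Both your argument and the paper's hinge on the same key observation---the $LU$-avoidance rule forces an extra non-$U$ step immediately after each of the first $n-1$ occurrences of $UD^{k}L$---but the bookkeeping differs. The paper counts \emph{down/left} steps directly: each non-final $UD^{k}L$ extends to a factor $UD^{k}L^{j}D$ with $j\ge 1$, contributing at least $k+2$ to the semilength, while the final factor contributes $k+1$; summing gives the bound in one line. You instead count \emph{up} steps via a height-tracking telescoping sum, showing the interior gaps $S_{i}$ must each contain at least one height-decreasing step and hence at least $h_{i+1}-h_{i}+k+1$ up steps. Your route is a bit longer but has the virtue of making explicit where each forced step lives (in the gaps, disjoint from the factors), which you rightly flag as a potential pitfall. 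One small terminological wrinkle: when you say $S_{i}$ ``must begin with a down step,'' you are tacitly including $L$ steps under that name; this is harmless for the argument (both $D$ and $L$ decrease height), but in the paper's conventions ``down step'' means $D$ specifically. Your witness path is the same as the paper's, merely regrouped.
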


\begin{proof}
Since
\[
\underset{n-1\:\mathrm{times}}{\underbrace{U^{k+2}D^{k}LD\cdots U^{k+2}D^{k}LD}}U^{k+1}D^{k}L
\]
is a skew Dyck path of semilength $(k+2)n-1$ with $n$ $UD^{k}L$-factors, it remains to show that there cannot be a shorter one. Consider an arbitrary skew Dyck path with exactly $n$ $UD^{k}L$-factors; note that each $UD^{k}L$-factor except the last one must be part of a $UD^{k}L^{j}D$-factor for some positive integer $j$, as each $L$ step (except the ending step) must be followed by another $L$ step or a $D$ step. This means that we have $n-1$ factors of the form $UD^{k}L^{j}D$ along with a $UD^{k}L$-factor. Note that each factor of the form $UD^{k}L^{j}D$ contributes $k+j+1$ to the semilength, whereas the ending $UD^{k}L$-factor contributes $k+1$ to the semilength. And since each $j$ must be at least 1, the semilength must be at least $(k+2)(n-1)+k+1=(k+2)n-1$ as desired.
\end{proof}
Let $\mathcal{B}_{n}^{(k)}$ denote the set of $k$-box paths of size $n$, and $\mathcal{B}^{(k)}\coloneqq\bigsqcup_{n\geq0}\mathcal{B}_{n}^{(k)}$ the set of all $k$-box paths.

\begin{lem}
\label{l-tboxstruct}
Every $k$-box path $\mu$ of size $n$ is of
the form 
\[
U^{a_{1}}D^{k}LD\cdots U^{a_{n-1}}D^{k}LDU^{a_{n}}D^{k}L
\]
where the $a_{1},a_{2},\dots,a_{n}$ are positive integers. Moreover, as long as $a_{1}+a_{2}+\cdots+a_{n}=(k+2)n-1$ and no prefix of $\mu$ contains fewer $U$s than the total number of $D$s and $L$s, then any $\mu$ of the form 
\[
U^{a_{1}}D^{k}LD\cdots U^{a_{n-1}}D^{k}LDU^{a_{n}}D^{k}L
\]
is a $k$-box path of size $n$.
\end{lem}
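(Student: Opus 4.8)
The plan is to prove the two implications separately, in each case building on the minimality analysis already carried out in the proof of Proposition~\ref{p-UDLmin}.

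For the forward direction, I would sharpen the counting from that proof. Recall it shows that in any skew Dyck path with exactly $n$ $UD^{k}L$-factors, each $UD^{k}L$-factor other than a terminal one lies inside a factor $UD^{k}L^{j}D$ with $j\geq1$, while the terminal one is a bare $UD^{k}L$. The $n$ disjoint box-factors $UD^{k}L^{j_{1}}D,\dots,UD^{k}L^{j_{n-1}}D,UD^{k}L$ then contain $n(k+1)+\sum_{i}j_{i}$ down-steps (that is, $D$- and $L$-steps). Since these are a subset of all down-steps and a $k$-box path has semilength exactly $(k+2)n-1$, which equals its total number of down-steps, we get $n(k+1)+\sum_{i}j_{i}\leq(k+2)n-1$, i.e.\ $\sum_{i}j_{i}\leq n-1$. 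Combined with $\sum_{i}j_{i}\geq n-1$, equality is forced: every $j_{i}=1$, and the box-factors account for \emph{all} down-steps of $\mu$. In particular the terminal factor is genuinely $UD^{k}L$ with its $L$ being the last step of the path (otherwise all $n$ boxes would be of type $UD^{k}L^{j}D$, giving semilength at least $(k+2)n$).

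To finish this direction, I would note that since every down-step lies in a box, all remaining steps are $U$'s; and since the path ends on the $x$-axis with the terminal box already closing the word, there are no trailing $U$'s. Thus $\mu$ consists of the boxes $UD^{k}LD$ (the first $n-1$) and the final $UD^{k}L$, separated by runs of free $U$'s. Absorbing each run of free $U$'s into the leading $U$ of the box that immediately follows it yields the form $U^{a_{1}}D^{k}LD\cdots U^{a_{n-1}}D^{k}LD\,U^{a_{n}}D^{k}L$, with each $a_{i}\geq1$ because every box supplies at least its own leading $U$.

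For the converse I would verify the three defining properties of a $k$-box path directly for a word $\mu$ of the stated form satisfying $a_{i}\geq1$, $\sum_{i}a_{i}=(k+2)n-1$, and the prefix condition. The skew Dyck constraints are immediate: each $L$ is preceded by $D^{k}$ and followed by $D$ or nothing, so (using $k\geq1$) both $UL$ and $LU$ are avoided; a one-line count gives $\#D+\#L=(k+2)n-1=\sum_{i}a_{i}=\#U$; and the nonnegativity of every prefix is assumed. Hence $\mu$ is a valid skew Dyck path of semilength $(k+2)n-1$. Finally, a short factor analysis shows each block contributes exactly one $UD^{k}L$-factor while the length-one connecting $D$-runs (each preceded by $L$ and followed by $U$) contribute none, giving exactly $n$ such factors. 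The delicate part of the whole argument is the forward direction: converting the inequality of Proposition~\ref{p-UDLmin} into rigid structure requires simultaneously pinning down all $j_{i}=1$ and excluding any stray down-steps between boxes, which is precisely the equality case; the converse is a routine verification.
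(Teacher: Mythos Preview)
Your proposal is correct and follows essentially the same approach as the paper: both directions build on the minimality analysis from Proposition~\ref{p-UDLmin}, using the equality case of the semilength bound to force each $j_i=1$ and to exclude any stray down-steps, then absorbing the remaining $U$-runs into the boxes. Your write-up is somewhat more explicit about the inequality $\sum_i j_i \le n-1$ and the $UL$/$LU$ avoidance check in the converse, but the underlying argument is the same.
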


\begin{proof}
This essentially follows from the same reasoning as the proof of Proposition \ref{p-UDLmin}. Let $\mu$ be a $k$-box path of size $n$, so that $\mu$ has semilength $(k+2)n-1$ and has $n$ $UD^{k}L$-factors. As explained in the proof of Proposition \ref{p-UDLmin}, each of these $UD^{k}L$-factors except the last one must be immediately followed by a $D$ step, so we have $n-1$ factors of the form $UD^{k}LD$ along with a $UD^{k}L$-factor. Together, these factors contribute the full $(k+2)n-1$ to the semilength, so $\mu$ cannot have any additional $D$ steps or $L$ steps. Therefore, $\mu$ must be of the desired form.

Now, assume that $\mu=U^{a_{1}}D^{k}LD\cdots U^{a_{n-1}}D^{k}LDU^{a_{n}}D^{k}L$ satisfies $a_{1}+a_{2}+\cdots+a_{n}=(k+2)n-1$ and the given prefix condition. These two conditions ensure that $\mu$ is a skew Dyck path of semilength $(k+2)n-1$, and since $\mu$ contains exactly $n$ $UD^{k}L$-factors, it follows that $\mu$ is a $k$-box path of size $n$.
\end{proof}

\begin{rem}
From Lemma \ref{l-tboxstruct} and the correspondence between $k$-Dyck paths and augmented $k$-Dyck paths, we see that $k$-box paths are in bijection with $(k+1)$-Dyck paths\textemdash which have steps $U=(1,1)$ and $D^{(k+1)}=(k+1,-k-1)$\textemdash but modified so that they end with a special down step $D^{(k)}=(k,-k)$. Equivalently, these are $(k+1)$-Dyck paths that end at the line $y=k$ as opposed to the $x$-axis.
\end{rem}

Lemma \ref{l-tboxstruct} allows us to establish a decomposition of $k$-box paths in terms of augmented $(k+1)$-Dyck paths. To do so, we need the notion of a \textit{return} in a skew Dyck path, which is a point on the path where a step ends at the $x$-axis. Revisiting Figure \ref{f-box3}, we see that among box paths of size 3, there are 3 paths with 1 return, 3 paths with 2 returns, and 1 path with 3 returns. More generally, a \textit{return to the line} $y=a$ is a point where a down step or a left step ends at the line $y=a$.

\begin{lem}
\label{l-tboxdecomp}
We have
\[
\mathcal{B}^{(k)}=\{\,\mu_{1}U\mu_{2}U\cdots\mu_{k+1}UD^{k}L:\mu_{1},\mu_{2},\dots,\mu_{k+1}\in{\cal \hat{D}}^{(k+1)}\,\}.
\]
\end{lem}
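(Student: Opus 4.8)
The plan is to prove the claimed set equality by the two inclusions, in each case reducing everything to the word-level characterization of $k$-box paths from Lemma \ref{l-tboxstruct}. Conceptually, de-augmenting a $k$-box path (replacing each factor $UD^kLD$ by a step $D^{(k+1)}$ and the terminal $UD^kL$ by a short step $D^{(k)}$) produces a $(k+1)$-Dyck path whose final descent is the short step $D^{(k)}$, and the asserted form $\mu_1 U\mu_2 U\cdots\mu_{k+1}UD^kL$ is exactly a ``last-return'' decomposition, dual to the generalized first-return decomposition of Section \ref{sec-2}. I will run the argument directly at the word level so as not to lean on the informal de-augmentation of the preceding remark.

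For the inclusion $\supseteq$, I start from arbitrary $\mu_1,\dots,\mu_{k+1}\in\hat{\mathcal{D}}^{(k+1)}$ and set $w=\mu_1 U\mu_2 U\cdots\mu_{k+1}UD^kL$. First I check that $w$ has the shape demanded by Lemma \ref{l-tboxstruct}: every maximal descent factor occurring inside some $\mu_i$ is $D^kLD$ and the only other descent is the terminal $D^kL$, while the $k+1$ inserted separator $U$s keep every ascent run nonempty even when some $\mu_i$ is empty; hence $w=U^{a_1}D^kLD\cdots U^{a_{n-1}}D^kLDU^{a_n}D^kL$ with each $a_i\ge 1$. Next I verify $a_1+\cdots+a_n=(k+2)n-1$ by counting up steps: a $\mu_i$ of size $m_i$ has $(k+2)m_i$ up steps (namely $(k+1)m_i$ from the underlying $(k+1)$-Dyck path plus one per augmented down step), the size of $w$ is $n=\sum_i m_i+1$, so the total is $\sum_i(k+2)m_i+(k+1)=(k+2)n-1$. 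Finally the prefix (nonnegativity) condition follows from base-level bookkeeping: $\mu_1$ runs at base height $0$, each separator raises the base by one so that $\mu_i$ runs at base $i-1\ge 0$, and the terminal $UD^kL$ climbs to height $k+1$ and descends through $k,\dots,1,0$. Lemma \ref{l-tboxstruct} then certifies $w\in\mathcal{B}^{(k)}$.

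For the inclusion $\subseteq$, which is the heart of the matter, I start from a $k$-box path $\mu=U^{a_1}D^kLD\cdots U^{a_n}D^kL$ and construct its cut points. Since every step changes height by exactly $\pm1$, the height profile is a $\pm1$ walk from $0$ to $0$ reaching height $k+1$ exactly at the vertex $P$ preceding the terminal $D^kL$. For $j=0,1,\dots,k$ let $x_j$ be the last vertex of height $j$ occurring before $P$. Because a $\pm1$ walk cannot drop below a level and later climb back without revisiting it, the height stays $\ge j+1$ from $x_j+1$ onward until $P$, so the step out of $x_j$ is an up step; hence each $x_j$ is a valley, these valleys satisfy $x_0<x_1<\cdots<x_k=P-1$, and each lands at a block boundary. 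Declaring the up step out of each $x_j$ to be a separator, I let $\mu_{j+1}$ be the portion of $\mu$ strictly between consecutive separators (with $\mu_1$ the portion before the first). Each $\mu_{j+1}$ begins and ends at height $j$ and stays $\ge j$, so after shifting down by $j$ it is nonnegative and balanced; the one subtlety is that the separator ``borrows'' the first up step of the ascent starting at $x_{j-1}$, and I must check this leaves a legitimate augmented path. This is precisely where the bound $\ge j$ is used: if that ascent had length $1$, the step after the separator would be a $D$, dropping the height below $j$ and contradicting the choice of $x_{j-1}$; hence the ascent has length $\ge 2$, and $\mu_{j+1}$ is again a product of whole $U^bD^kLD$-blocks, i.e.\ an element of $\hat{\mathcal{D}}^{(k+1)}$. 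This exhibits $\mu$ in the required form.

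The main obstacle is this $\subseteq$ direction: establishing that the last-visit vertices $x_j$ are valleys out of which the path is forced upward, that they are strictly increasing and land on block boundaries, and above all that ``borrowing'' an up step for each separator is legitimate. All three points hinge on the single observation that past $x_j$ the walk stays strictly above height $j$, so the delicate part is packaging this height bound carefully enough that each extracted piece $\mu_{j+1}$ is genuinely an augmented $(k+1)$-Dyck path rather than merely a nonnegative excursion.
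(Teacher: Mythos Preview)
Your proof is correct and follows essentially the same decomposition as the paper: the paper defines $\mu_1$ as the prefix up to the penultimate return to the $x$-axis, then (after the next $U$) $\mu_2$ up to the penultimate return to $y=1$, and so on, which is exactly your ``last vertex at height $j$ before $P$'' construction. Your write-up is in fact more careful than the paper's on the one delicate point, namely checking that each separator borrows an up step from an ascent of length at least two (so that the extracted $\mu_{j+1}$ begins with a genuine $U^{b}D^{k}LD$-block rather than a bare descent); the paper leaves this implicit.
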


\begin{proof}
Let $\mu\in\mathcal{B}^{(k)}$. Then we can write $\mu$ in the desired form in the following way. Let $\mu_{1}$ be the prefix of $\mu$ up to the penultimate return to the $x$-axis. (If $\mu$ has only one return, then let $\mu_{1}$ be empty.) Then, ignoring the next $U$, let $\mu_{2}$ be the prefix of the remaining segment of the path up to the penultimate return to the line $y=1$ (again setting $\mu_{2}$ to be empty if no such return exists). Then $\mu_{3},\dots,\mu_{k+1}$ are defined in the analogous way. In light of Lemma~\ref{l-tboxstruct}, we see that each $\mu_{i}$ must be a skew Dyck path of the form 
\[
U^{b_{1}}D^{k}LDU^{b_{2}}D^{k}LD\cdots U^{b_{m}}D^{k}LD
\]
and is therefore an augmented $(k+1)$-Dyck path, as desired.

Conversely, if each $\mu_{i}$ is an augmented $(k+1)$-Dyck path, then they must each be of the form 
\[
U^{b_{1}}D^{k}LDU^{b_{2}}D^{k}LD\cdots U^{b_{m}}D^{k}LD.
\]
Then $\mu=\mu_{1}U\mu_{2}U\cdots\mu_{k+1}UD^{k}L$ is of the form
\[
U^{a_{1}}D^{k}LD\cdots U^{a_{n-1}}D^{k}LDU^{a_{n}}D^{k}L;
\]
moreover, the equation $a_{1}+a_{2}+\cdots+a_{n}=(k+2)n-1$ and the prefix condition of Lemma~\ref{l-tboxstruct} follow from the decomposition $\mu=\mu_{1}U\mu_{2}U\cdots\mu_{k+1}UD^{k}L$ and the fact that each $\mu_{i}$ is a skew Dyck path. Then it follows from Lemma \ref{l-tboxstruct} that $\mu$ is a $k$-box path.
\end{proof}

\begin{thm}
\label{t-kbox}
The number of $k$-box paths of size $n$ is equal to the number of $(k+1)$-tuples of $(k+2)$-ary trees with a total of $n-1$ nodes, which is
\[
f_{k,n}\coloneqq\frac{k+1}{n-1}{(k+2)n-2 \choose n-2}=\frac{1}{(k+2)n-1}{(k+2)n-1 \choose n}.
\]
\end{thm}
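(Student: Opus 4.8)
The plan is to chain together the structural results already established with the path--tree bijections from Section \ref{sec-2}, and then read off the count from the Fuss--Catalan formula (\ref{e-rthpower}).

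First I would invoke Lemma \ref{l-tboxdecomp} to write each $k$-box path of size $n$ uniquely as $\mu_{1}U\mu_{2}U\cdots\mu_{k+1}UD^{k}L$ with each $\mu_{i}\in\hat{\mathcal{D}}^{(k+1)}$. The crucial bookkeeping step is to track how the size $n$ distributes among the $\mu_{i}$. Since the size of a $k$-box path equals its number of $UD^{k}L$-factors, and each augmented $(k+1)$-Dyck path $\mu_{i}$ of size $m_{i}$ contributes exactly $m_{i}$ such factors (one inside each $UD^{k}LD$ block), with the trailing $UD^{k}L$ contributing one more, I would conclude that $m_{1}+\cdots+m_{k+1}=n-1$. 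Thus $k$-box paths of size $n$ are in bijection with $(k+1)$-tuples of augmented $(k+1)$-Dyck paths of total size $n-1$.

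Next I would pass from augmented $(k+1)$-Dyck paths to $(k+2)$-ary trees. By definition, collapsing each $UD^{k}LD$-factor back to a single $D^{(k+1)}$ step gives a size-preserving bijection between $\hat{\mathcal{D}}^{(k+1)}$ and $(k+1)$-Dyck paths, and the recursive first-return decomposition recalled in Section \ref{sec-2} matches $(k+1)$-Dyck paths of size $m$ with $(k+2)$-ary trees on $m$ nodes. Composing these, a $(k+1)$-tuple of augmented $(k+1)$-Dyck paths of total size $n-1$ corresponds to a $(k+1)$-tuple of $(k+2)$-ary trees with $n-1$ total nodes, which establishes the first assertion of the theorem. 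The count then follows from (\ref{e-rthpower}) applied with arity $k+2$, exponent $r=k+1$, and $n-1$ total nodes: this yields $[x^{n-1}]\,C_{k+2}^{k+1}=\frac{k+1}{n-1}\binom{(k+2)n-2}{n-2}$, the first displayed expression for $f_{k,n}$. To reach the second expression I would simplify the denominator $(k+2)(n-1)+(k+1)=(k+2)n-1$ in the alternate form of (\ref{e-rthpower}) and then apply the elementary identity $(k+1)\binom{(k+2)n-1}{n-1}=\binom{(k+2)n-1}{n}$, which follows from $\binom{(k+2)n-1}{n}=\frac{(k+1)n}{n}\binom{(k+2)n-1}{n-1}$.

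I expect the main obstacle to be the size bookkeeping in the first step---verifying that the $m_{i}$ sum to $n-1$ rather than $n$, and that no $UD^{k}L$-factors are created or destroyed at the junctions where the separating $U$ steps abut the $\mu_{i}$. Everything downstream is a composition of already-established bijections together with a one-line binomial manipulation.
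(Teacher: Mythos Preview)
Your proposal is correct and follows essentially the same route as the paper: invoke Lemma~\ref{l-tboxdecomp} to split a $k$-box path into a $(k+1)$-tuple of augmented $(k+1)$-Dyck paths of total size $n-1$, collapse each $UD^{k}LD$ to a $D^{(k+1)}$ to get $(k+1)$-Dyck paths, pass to $(k+2)$-ary trees via the first-return bijection of Section~\ref{sec-2}, and then read off the count from (\ref{e-rthpower}). Your extra care with the size bookkeeping and the explicit binomial identity for the second displayed form are exactly the details the paper summarizes as ``routine algebraic manipulation.''
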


\begin{proof}
By Lemma \ref{l-tboxdecomp}, we can uniquely decompose a $k$-box path of size $n$ as a sequence of $k+1$ augmented $(k+1)$-Dyck paths with total size $n-1$. These are in bijection with $(k+1)$-tuples of $(k+1)$-Dyck paths with total size $n-1$ (we replace each $UD^{k}LD$-factor with a $D^{(k+1)}$ step), and such paths are in bijection with $(k+2)$-ary trees of the same size. The formula $\frac{k+1}{n-1}{(k+2)n-2 \choose n-2}$ then comes directly from (\ref{e-rthpower}), which can be shown to be equal to $\frac{1}{(k+2)n-1}{(k+2)n-1 \choose n}$ by routine algebraic manipulation.
\end{proof}

By taking $k=1$, we get that there are ${3n-1 \choose n}/(3n-1)$ box paths of size $n$, thus confirming the initial observation which motivated this work.

\begin{rem}
\label{r-k=00003D0}
Our definition of a $k$-box path is only valid for positive integers $k$ and not for $k=0$; after all, a skew Dyck path cannot have a $UL$-factor as the $U$ and $L$ steps would overlap. Nonetheless, let us examine what would happen if $UL$-factors were allowed. A ``$0$-box path'' would be of the form $\mu UL$ where $\mu$ is an ``augmented 1-Dyck path''\textemdash i.e., the result of taking a Dyck path and replacing each $D$ step with a $ULD$. If we delete each $UL$-factor\textemdash that is, we can view the $U$ and $L$ steps as ``canceling out''\textemdash then the result would be a Dyck path of semilength $n-1$ (where $n$ is the number of $UL$-factors). Conversely, given a Dyck path of semilength $n$, we can replace each $D$ step with a $ULD$-factor and append a $UL$-factor to get a ``$0$-box path'' with $n$ $UL$-factors. Therefore, we can think of a ``$0$-box path'' of size $n$ to be a Dyck path of semilength $n-1$. Then Theorem \ref{t-kbox} recovers the well-known fact that Dyck paths are in bijection with binary trees. Using this interpretation, all of our results later in this paper for $k$-box paths become valid for $k=0$ as well.
\end{rem}

Before proceeding, we note that there is a natural subfamily of $k$-box paths which are equinumerous to (single) $(k+2)$-ary trees, namely, those that end with a $U^{k+1}D^{k}L$-factor. Let us call such paths \textit{tailed $k$-box paths}. Note that all $0$-box paths are tailed.

\begin{prop}
\label{p-tailed}
The number of tailed $k$-box paths of size $n$ is equal to the number of $(k+2)$-ary trees with $n-1$ nodes, which is
\[
\frac{1}{n-1}{(k+2)(n-1) \choose n-2}=\frac{1}{(k+2)(n-1)+1}{(k+2)(n-1)+1 \choose n-1}.
\]
\end{prop}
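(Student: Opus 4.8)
The plan is to set up a direct bijection between tailed $k$-box paths of size $n$ and augmented $(k+1)$-Dyck paths of size $n-1$, and then compose it with the chain of bijections already established to reach $(k+2)$-ary trees. First I would observe that, by Lemma \ref{l-tboxstruct}, a tailed $k$-box path of size $n$ is a word of the form $U^{a_{1}}D^{k}LD\cdots U^{a_{n-1}}D^{k}LD\,U^{k+1}D^{k}L$, so it is obtained from $\nu\coloneqq U^{a_{1}}D^{k}LD\cdots U^{a_{n-1}}D^{k}LD$ by appending the tail $U^{k+1}D^{k}L$. The key claim is that the tail-deletion map $\mu\mapsto\nu$ is a bijection from the set of tailed $k$-box paths of size $n$ onto ${\cal \hat{D}}_{n-1}^{(k+1)}$.

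To verify this I would check both directions. Going forward, the tail $U^{k+1}D^{k}L$ has net height change $(k+1)-k-1=0$ and is traversed starting from the $x$-axis, so $\nu$ itself returns to the $x$-axis and is a genuine skew Dyck path; since it has exactly the form $U^{a_{1}}D^{k}LD\cdots U^{a_{n-1}}D^{k}LD$, it is an augmented $(k+1)$-Dyck path of size $n-1$ by the characterization in Section \ref{sec-2}. Conversely, given $\nu\in{\cal \hat{D}}_{n-1}^{(k+1)}$, appending $U^{k+1}D^{k}L$ yields a word of the shape demanded by Lemma \ref{l-tboxstruct} with $a_{n}=k+1$; the bookkeeping is routine, since the return-to-$x$-axis property of $\nu$ forces $a_{1}+\cdots+a_{n-1}=(k+2)(n-1)$, whence $a_{1}+\cdots+a_{n}=(k+2)(n-1)+(k+1)=(k+2)n-1$, and the prefix condition is immediate because the appended tail never dips below the $x$-axis. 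Lemma \ref{l-tboxstruct} then certifies that the result is a $k$-box path of size $n$, tailed by construction.

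With the bijection between tailed $k$-box paths of size $n$ and ${\cal \hat{D}}_{n-1}^{(k+1)}$ in hand, I would finish by composing it with the bijection between augmented $(k+1)$-Dyck paths and $(k+1)$-Dyck paths (replacing each $UD^{k}LD$-factor by a single $D^{(k+1)}$ step) and the recursive bijection between $(k+1)$-Dyck paths and $(k+2)$-ary trees recalled in Section \ref{sec-2}. All of these preserve size, so tailed $k$-box paths of size $n$ are equinumerous with $(k+2)$-ary trees having $n-1$ nodes, and the stated formula follows by substituting $k\mapsto k+2$ and $n\mapsto n-1$ in (\ref{e-gencat}).

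I expect the only genuine subtlety to be the first step, namely correctly pinning down that the tailed condition $a_{n}=k+1$ is exactly what makes the tail-deletion land in ${\cal \hat{D}}^{(k+1)}$ and nothing larger. One can arrive at the same conclusion by instead specializing Lemma \ref{l-tboxdecomp}: writing $\mu=\mu_{1}U\mu_{2}U\cdots\mu_{k+1}UD^{k}L$ with each $\mu_{i}\in{\cal \hat{D}}^{(k+1)}$, the final run of $U$ steps has length $k+1$ (that is, $a_{n}=k+1$) if and only if $\mu_{2}=\cdots=\mu_{k+1}$ are empty, leaving $\mu_{1}$ free to range over ${\cal \hat{D}}^{(k+1)}$; this recovers the same correspondence. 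The remaining steps are formula-preserving bijections already proved, so no real obstacle remains there.
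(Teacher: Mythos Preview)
Your proposal is correct and takes essentially the same approach as the paper: both identify tailed $k$-box paths of size $n$ with ${\cal \hat{D}}_{n-1}^{(k+1)}$ and then chain through $(k+1)$-Dyck paths to $(k+2)$-ary trees. The paper's proof is just the specialization of Lemma~\ref{l-tboxdecomp} you describe in your final paragraph (tailed $\Leftrightarrow$ $\mu_{2}=\cdots=\mu_{k+1}$ empty), whereas your main route argues the same bijection directly from Lemma~\ref{l-tboxstruct}; the resulting map $\mu\mapsto\nu=\mu_{1}$ is identical.
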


\begin{proof}
Tailed $k$-box paths are precisely the $k$-box paths where $\mu_{2},\dots,\mu_{k+1}$ are all empty in the decomposition given by Lemma \ref{l-tboxdecomp}. Consequently, tailed $k$-box paths of size $n$ are in bijection with augmented $(k+1)$-Dyck paths of size $n-1$, which are in bijection with $(k+1)$-Dyck paths of size $n-1$ and thus $(k+2)$-ary trees with $n-1$ nodes.
\end{proof}

By dividing the formula in Proposition \ref{p-tailed} by that in Theorem \ref{t-kbox} and simplifying, we find that the proportion of tailed $k$-box paths among paths in $\mathcal{B}_{n}^{(k)}$ is equal to
\[
\frac{((k+1)n)_{k}}{(k+1)((k+2)n-2)_{k}}
\]
where $(x)_{j}$ denotes the falling factorial $(x)_{j}\coloneqq x(x-1)\cdots(x-j+1)$. When $k=0$, this proportion is always equal to 1, which makes sense in light of Remark \ref{r-k=00003D0} because every ``$0$-box path'' ends with a $UL$-factor. For general $k$, this proportion tends to
\[
\lim_{n\rightarrow\infty}\frac{((k+1)n)_{k}}{(k+1)((k+2)n-2)_{k}}=\frac{(k+1)^{k-1}}{(k+2)^{k}}
\]
as $n\rightarrow\infty$. In particular, by taking $k=1$, we see that approximately $1/3$ of box paths of size $n$ end with a $UUDL$ when $n$ is large.

\section{\label{s-selkirk}A bijection between \texorpdfstring{$k$}{k}-box paths and \texorpdfstring{$(k+1)_{k}$}{(k+1)k}-Dyck paths}

Recall that a $k$-\textit{Dyck path} of \textit{size} $n$ is a path in $\mathbb{Z}^{2}$ which begins at the origin $(0,0)$, ends at $(2kn,0)$, never traverses below the $x$-axis, and consists of steps $U=(1,1)$ and $D^{(k)}=(k,-k)$. A $k_{t}$\textit{-Dyck path} of size $n$ (where $t$ is a non-negative integer) is defined in the same way except that the path never traverses below the line $y=-t$ (as opposed to the $x$-axis), so that we recover $k$-Dyck paths upon setting $t=0$. These generalizations of $k$-Dyck paths were introduced recently by Selkirk \cite{Selkirk2019} and further studied in \cite{Asinowski2022,Heuberger2023}.

Selkirk found that, for $0\leq t\leq k-1$, the number of $k_{t}$-Dyck paths of size $n$ is given by the formula 
\[
\frac{t+1}{(k+1)n+t+1}{(k+1)n+t+1 \choose n}
\]
\cite[Proposition 2.2.2]{Selkirk2019}; comparing with (\ref{e-rthpower}), we see that this is the number of $(t+1)$-tuples of $(k+1)$-ary trees with $n$ total nodes. Given Theorem \ref{t-kbox}, it follows that $k$-box paths of size $n$ are equinumerous with $(k+1)_{k}$-Dyck paths of size $n-1$.

There is a bijection between these paths which is easy to describe. Given a $k$-box path of size $n$, we delete the last $UD^{k}L$-factor and then replace each $UD^{k}LD$-factor with a $D^{(k+1)}$ step. The result is a prefix of a $(k+1)$-Dyck path that ends on the line $y=k$, and we obtain a $(k+1)_{k}$-Dyck path by vertically translating the path down $k$ units, deleting the first $k$ steps of the path, and then horizontally translating the path $k$ units to the left. See Figure \ref{f-selkirkbij} for an example. (This bijection does nothing when $k=0$, which makes sense because $0$-box paths of size $n$ and $1_{0}$-Dyck paths of size $n-1$ are both Dyck paths of semilength $n-1$.)

\begin{figure}
\noindent \begin{centering}
\begin{tikzpicture}[scale=0.4] 
\draw [line width=0] (4,2); 
\draw[pathcolorlight] (0,0) -- (17,0); 
\drawlinedots{0,1,2,3,4,5,6,7,6,7,8,9,10,11,12,11,12,13,14,15,16,17,16}{0,1,2,3,4,5,4,3,2,1,2,3,4,3,2,1,0,1,2,3,2,1,0}
\end{tikzpicture}
\hspace{0.5cm}
$\boldsymbol{\longleftrightarrow}$
\hspace{0.5cm}
\begin{tikzpicture}[scale=0.4] 
\draw [line width=0] (4,2); 
\draw[pathcolorlight] (0,0) -- (12,0); 
\drawlinedots{0,1,2,5,6,7,10,11,12}{0,1,2,-1,0,1,-2,-1,0}
\end{tikzpicture}
\par\end{centering}
\caption{\label{f-selkirkbij}A $2$-box path and the corresponding $3_{2}$-Dyck
path}
\end{figure}
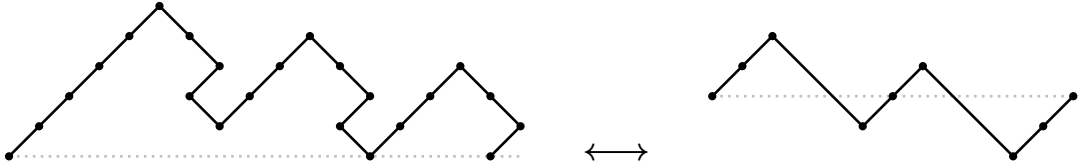

We note that there is a subfamily of Motzkin paths introduced in \cite{Prodinger2020}, called \textit{$T$-Motzkin paths}, which are in bijection with pairs of ternary trees. Selkirk gave a bijection between $T$-Motzkin paths and $2_{1}$-Dyck paths \cite[Section 8.2.1]{Selkirk2019}. The simplest bijection from $T$-Motzkin paths to box paths is perhaps obtained by composing Selkirk's bijection with the inverse of our above bijection for $k=1$.

\section{\label{s-rusu}A bijection between \texorpdfstring{$k$}{k}-box paths and \texorpdfstring{$(k+2,k)$}{(k+2)k}-threshold sequences}

Given integers $n\geq1$, $k\geq2$, and $0\leq\ell\leq k-2$, a \textit{$(k,\ell)$-threshold sequence} of \textit{length} $n$ is a strictly increasing sequence $(s_{1},s_{2},\dots,s_{n})$ of $n$ integers satisfying $ki\leq s_{i}\leq kn+\ell$ for each $i$. Threshold sequences were very recently defined and studied by Rusu \cite{Rusu2022}, who showed that $(k,\ell)$-threshold sequences of length $n$ are in bijection with ($\ell+1$)-tuples of $k$-ary trees with $n$ nodes and are therefore another member of the Fuss\textendash Catalan family of combinatorial objects. Below, we give a bijection between $k$-box paths of size $n$ and $(k+2,k$)-threshold sequences of length $n-1$.

Let $\mu=U^{a_{1}}D^{k}LD\cdots U^{a_{n-1}}D^{k}LDU^{a_{n}}D^{k}L$ be a $k$-box path of size $n$. We claim that the sequence $(s_{i})_{1\leq i\leq n-1}=(s_{1},s_{2},\dots,s_{n-1})$ obtained by taking $s_{i}=\sum_{j=1}^{i}a_{j}$ for each $1\leq i\leq n-1$ is a $(k+2,k)$-threshold sequence. Clearly this sequence is increasing as each $a_j$ is positive, so it suffices to verify that $(k+2)i\leq s_{i}\leq(k+2)(n-1)+k$ for each $i$. Since $\mu$ is a $k$-box path of size $n$, it has semilength $(k+2)n-1$; this is the number of up steps in $\mu$. Notice that the upper bound $(k+2)(n-1)+k$ is equal to $(k+2)n-2$, which is one less than the number of up steps. Because $a_{n}\geq1$ is not accounted for in any of the $s_{i}$, it follows that 
\[
s_{i}+1\leq s_{i}+a_{n}\leq(k+2)n-1
\]
for each $i$, and thus $s_{i}\leq(k+2)n-2$. For the lower bound, recall that each $UD^{k}L$-factor (except the last one) in $\mu$ is immediately followed by a $D$\textemdash hence, each of the $U^{a_{i}}$ for $1\leq i\leq n-1$ is immediately followed by a total of $k+2$ down and left steps before encountering another up step. So, if $a_{1}+a_{2}+\cdots+a_{i}=s_{i}$ were smaller than $(k+2)i$, then $\mu$ would go below the $x$-axis. This establishes the lower bound $s_{i}\geq(k+2)i$.

An enumerative combinatorialist will have recognized the map $\mu\mapsto(s_{i})_{1\leq i\leq n-1}$ given above as essentially (a restriction of) the classical bijection from compositions to subsets. As such, to recover $\mu$ from $(s_{i})_{1\leq i\leq n-1}$, we simply take $a_{i}=s_{i}-s_{i-1}$ for each $1\leq i\leq n$, where we set $s_{0}=0$ and $s_{n}=(k+2)n-1$. It is straightforward to show that this yields a $k$-box path of size $n$ given any $(k+2,k)$-threshold sequence of length $n-1$; we leave the details to the reader.

\section{Counting \texorpdfstring{$k$}{k}-box paths by returns} \label{sec-6}

Let us now turn our attention to the refined enumeration of $k$-box paths by statistics, beginning with the number of returns. Define $\grave{f}_{k,n,j}$ to be the number of $k$-box paths of size $n$ with exactly $j$ returns, and $\grave{g}_{k,n,j}$ the number of augmented $k$-Dyck paths of size $n$ with exactly $j$ returns. Then take
\[
\grave{F}_{k}(t,x)\coloneqq\sum_{n=1}^{\infty}\sum_{j=1}^{n}\grave{f}_{k,n,j}t^{j}x^{n}\quad\text{\text{and}}\quad\grave{G}_{k}(t,x)\coloneqq\sum_{n=0}^{\infty}\sum_{j=1}^{n}\grave{g}_{k,n,j}t^{j}x^{n}
\]
to be the bivariate generating functions for counting $k$-box paths and augmented $k$-Dyck paths, respectively, by the number of returns. Also let 
\[
G_{k}(x)\coloneqq\sum_{n=0}^{\infty}|\mathcal{\hat{D}}_{n}^{(k)}|x^{n}=\sum_{n=0}^{\infty}\frac{1}{(k+1)n+1}{(k+1)n+1 \choose n}x^{n}.
\]
be the ordinary generating function for augmented $k$-Dyck paths. For simplicity, let us write $\grave{F}_{k}=\grave{F}_{k}(t,x)$, $\grave{G}_{k}=\grave{G}_{k}(t,x)$, and $G_{k}=G_{k}(x)$.

The distribution of the number of returns over $k$-Dyck paths (also given by the numbers $\grave{g}_{k,n,j}$) was previously studied by Cameron and McLeod \cite{Cameron2016}. We need the generating function $\grave{G}_{k}$ in order to derive a formula for $\grave{f}_{k,n,j}$.

\begin{lem}
\label{l-retfeq}
We have
\[
\grave{F}_{k}=\frac{txG_{k+1}^{k}}{1-txG_{k+1}^{k+1}}.
\]
\end{lem}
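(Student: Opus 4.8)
The plan is to combine the structural decomposition of Lemma~\ref{l-tboxdecomp} with a first-return analysis and a short auxiliary computation for augmented $(k+1)$-Dyck paths. By Lemma~\ref{l-tboxdecomp}, every $k$-box path can be written uniquely as $\mu=\mu_{1}U\mu_{2}U\cdots\mu_{k+1}UD^{k}L$ with $\mu_{1},\dots,\mu_{k+1}\in\hat{\mathcal{D}}^{(k+1)}$, and the first thing I would do is read off how the size and the number of returns distribute across this decomposition. The size satisfies $\operatorname{size}(\mu)=1+\sum_{i}\operatorname{size}(\mu_{i})$, the extra $1$ coming from the terminal $UD^{k}L$. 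For the returns, the key observation is that the intervening up steps lift $\mu_{2},\dots,\mu_{k+1}$ to heights $1,\dots,k$ respectively, so these paths never touch the $x$-axis and contribute no returns; only $\mu_{1}$ sits on the $x$-axis, and its own returns are genuine returns of $\mu$. The terminal factor $UD^{k}L$ adds exactly one further return, via its closing $L$ step. Hence $\operatorname{ret}(\mu)=1+\operatorname{ret}(\mu_{1})$.

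Translating this bijective decomposition into generating functions, with $t$ marking returns and $x$ marking size, immediately yields
\[
\grave{F}_{k}=tx\,\grave{G}_{k+1}\,G_{k+1}^{k},
\]
where the factor $tx$ records the terminal box and its return, $\grave{G}_{k+1}$ accounts for $\mu_{1}$ (tracked by returns), and $G_{k+1}^{k}$ accounts for the height-lifted paths $\mu_{2},\dots,\mu_{k+1}$ (whose returns are not tracked). It then remains to compute $\grave{G}_{k+1}$. For this I would use the correspondence between augmented $(k+1)$-Dyck paths and $(k+1)$-Dyck paths, first checking that it preserves the number of returns: within each $UD^{k}LD$ factor only the closing $D$ can reach the $x$-axis, so a return of the augmented path occurs exactly when the corresponding $D^{(k+1)}$ step of the underlying $(k+1)$-Dyck path returns. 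Applying the first-return decomposition $U\nu_{1}U\cdots U\nu_{k+1}D^{(k+1)}\nu_{k+2}$ then gives the functional equation $\grave{G}_{k+1}=1+txG_{k+1}^{k+1}\grave{G}_{k+1}$, so that $\grave{G}_{k+1}=1/(1-txG_{k+1}^{k+1})$. Substituting this into the displayed expression for $\grave{F}_{k}$ produces the claimed formula.

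The computation is largely routine once the decomposition is in place, and the part requiring genuine care is the bookkeeping of returns. I expect the main obstacle to be verifying rigorously that exactly one of the $k+1$ sub-paths (namely $\mu_{1}$) contributes its return statistic while the other $k$ contribute none, and that the terminal $UD^{k}L$ — respectively the single $D^{(k+1)}$ step in the auxiliary decomposition — contributes precisely one return with no spurious intermediate returns. Getting these counts right is exactly what fixes the exponents $G_{k+1}^{k}$ versus $G_{k+1}^{k+1}$ and the single factors of $t$ and $x$; a miscount here is the only realistic source of error. As a consistency check I would confirm that setting $t=1$ recovers $\grave{G}_{k+1}=G_{k+1}$, which is equivalent to the functional equation $G_{k+1}=1+xG_{k+1}^{k+2}$ for augmented $(k+1)$-Dyck paths.
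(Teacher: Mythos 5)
Your proof is correct and takes essentially the same approach as the paper: both use the decomposition of Lemma \ref{l-tboxdecomp} to get $\grave{F}_{k}=tx\,\grave{G}_{k+1}G_{k+1}^{k}$, and a first-return-type decomposition to get $\grave{G}_{k+1}=1/(1-txG_{k+1}^{k+1})$. The only cosmetic difference is that the paper derives the latter equation by decomposing augmented $(k+1)$-Dyck paths directly, whereas you transfer to plain $(k+1)$-Dyck paths through the return-preserving bijection and decompose there.
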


\begin{proof}
Recall that every augmented $k$-Dyck path $\mu$ is either the empty path or can be uniquely decomposed as
\[
\mu=U\mu_{1}U\mu_{2}U\mu_{3}\cdots U\mu_{k}UD^{k-1}LD\mu_{k+1}
\]
where the $\mu_{1},\dots,\mu_{k+1}$ are augmented $k$-Dyck paths. In this decomposition, observe that the number of returns of $\mu$ is one more than the number of returns of $\mu_{k+1}$, and the number of $UD^{k-1}LD$-factors in $\mu$ is one more than the total number of such factors in the $\mu_{1},\dots,\mu_{k+1}$. Hence, we have the functional equation 
\[
\grave{G}_{k}=1+txG_{k}^{k}\grave{G}_{k},
\]
which gives us
\[
\grave{G}_{k}=\frac{1}{1-txG_{k}^{k}}.
\]
From the decomposition for $k$-box paths given in Lemma \ref{l-tboxdecomp} and similar reasoning, we see that 
\[
\grave{F}_{k}=txG_{k+1}^{k}\grave{G}_{k+1}=\frac{txG_{k+1}^{k}}{1-txG_{k+1}^{k+1}}
\]
as desired.
\end{proof}

\begin{thm}
\label{t-kboxret}
The number of $k$-box paths of size $n$ with exactly $j$ returns is equal to
\[
\grave{f}_{k,n,j}=\frac{j(k+1)-1}{n-j}{(k+2)n-j-2 \choose n-j-1}=\frac{j(k+1)-1}{(k+2)n-j-1}{(k+2)n-j-1 \choose n-j}.
\]
\end{thm}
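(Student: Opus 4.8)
The plan is to extract the coefficient of $t^j x^n$ from the generating function $\grave{F}_k$ supplied by Lemma \ref{l-retfeq}. The crucial preliminary observation is that the series $G_{k+1}$ is nothing other than the generating function $C_{k+2}$ for $(k+2)$-ary trees: augmented $(k+1)$-Dyck paths are in bijection with $(k+1)$-Dyck paths and hence with $(k+2)$-ary trees, so $G_{k+1}=C_{k+2}$. Writing $C\coloneqq G_{k+1}=C_{k+2}$, Lemma \ref{l-retfeq} reads $\grave{F}_k=txC^k/(1-txC^{k+1})$, and I recall that $C$ satisfies the functional equation $C=1+xC^{k+2}$.

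Next I would expand the right-hand side as a geometric series in $t$,
\[
\grave{F}_k=txC^k\sum_{m=0}^{\infty}\left(txC^{k+1}\right)^m=\sum_{m=0}^{\infty}t^{m+1}x^{m+1}C^{\,k+m(k+1)},
\]
and read off the coefficient of $t^j$ by setting $m=j-1$. A short computation collapses the exponent to $k+(j-1)(k+1)=j(k+1)-1$, so that
\[
[t^j]\,\grave{F}_k=x^{j}\,C^{\,j(k+1)-1}.
\]
Consequently $\grave{f}_{k,n,j}=[x^{n-j}]\,C_{k+2}^{\,j(k+1)-1}$; that is, $\grave{f}_{k,n,j}$ counts $(j(k+1)-1)$-tuples of $(k+2)$-ary trees with a total of $n-j$ nodes.

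The final step is to invoke the Fuss\textendash Catalan formula (\ref{e-rthpower}) with the subscript specialized to $k+2$, the power $r=j(k+1)-1$, and coefficient index $n-j$. Using the first form of (\ref{e-rthpower}) and simplifying the numerator of the binomial via $(k+2)(n-j)+j(k+1)-2=(k+2)n-j-2$ yields the first expression $\frac{j(k+1)-1}{n-j}\binom{(k+2)n-j-2}{n-j-1}$ in the theorem; using the second form of (\ref{e-rthpower}) together with the identity $(k+2)(n-j)+j(k+1)-1=(k+2)n-j-1$ yields the second expression.

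I expect no genuine obstacle here: once $G_{k+1}$ is recognized as $C_{k+2}$, the argument is a direct coefficient extraction followed by an appeal to a formula already established in the excerpt. The only places demanding care are the bookkeeping in the geometric-series expansion (pinning down the exponent $j(k+1)-1$) and the routine algebraic simplification of the two binomial arguments, which are the steps most prone to off-by-one errors.
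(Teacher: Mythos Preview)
Your proposal is correct and follows essentially the same route as the paper's proof: expand $\grave{F}_k$ from Lemma~\ref{l-retfeq} as a geometric series in $t$, read off the coefficient of $t^j$ as $x^jG_{k+1}^{\,j(k+1)-1}$, and then apply the Fuss\textendash Catalan formula~(\ref{e-rthpower}). The only cosmetic difference is that you make the identification $G_{k+1}=C_{k+2}$ explicit, whereas the paper invokes (\ref{e-rthpower}) for the coefficients of $G_{k+1}^{r}$ directly.
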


\begin{proof}
Using Lemma \ref{l-retfeq} and the formula for Fuss\textendash Catalan numbers (which give the coefficients of $G_{k}^{r}$), we obtain
\begin{align*}
[t^{j}x^{n}]\,\grave{F}_{k} & =[t^{j}x^{n}]\,\frac{txG_{k+1}^{k}}{1-txG_{k+1}^{k+1}}\\
 & =[t^{j}x^{n}]\,\sum_{j=0}^{\infty}t^{j+1}x^{j+1}G_{k+1}^{j(k+1)+k}\\
 & =[t^{j}x^{n}]\,\sum_{j=1}^{\infty}t^{j}x^{j}G_{k+1}^{(j-1)(k+1)+k}\\
 & =[x^{n-j}]\,G_{k+1}^{j(k+1)-1}\\
 & =\frac{j(k+1)-1}{n-j}{(k+2)(n-j)+j(k+1)-2 \choose n-j-1}\\
 & =\frac{j(k+1)-1}{n-j}{(k+2)n-j-2 \choose n-j-1}\\
 & =\frac{j(k+1)-1}{(k+2)n-j-1}{(k+2)n-j-1 \choose n-j}.\qedhere
\end{align*}
\end{proof}
For $k=0$, we recover the formula 
\[
\grave{f}_{0,n+1,j+1}=\frac{j}{2n-j}{2n-j \choose n}
\]
for the number of Dyck paths of semilength $n$ with $j$ returns \cite[Section 6.6]{Deutsch1999}. The arrays of the numbers $\grave{f}_{k,n,j}$ for $k=1$ and $k=2$ are displayed in Tables \ref{tb-1ret}\textendash \ref{tb-2ret}. The array for $k=1$ can be found in entry A143603 of the OEIS \cite{oeis}.

\renewcommand{\arraystretch}{1.2}
\begin{table}[t]
\begin{centering}
\begin{tabular}{|c|>{\centering}p{1.2cm}|>{\centering}p{1.2cm}|>{\centering}p{1.2cm}|>{\centering}p{1.2cm}|>{\centering}p{1.2cm}|>{\centering}p{1.2cm}|>{\centering}p{1.2cm}|>{\centering}p{1.2cm}|}
\hline 
\diagbox[height=0.9cm]{$n$}{$j$} & $1$ & $2$ & $3$ & $4$ & $5$ & $6$ & $7$ & $8$\tabularnewline
\hline 
$1$ & $1$ &  &  &  &  &  &  & \tabularnewline
\hline 
$2$ & $1$ & $1$ &  &  &  &  &  & \tabularnewline
\hline 
$3$ & $3$ & $3$ & $1$ &  &  &  &  & \tabularnewline
\hline 
$4$ & $12$ & $12$ & $5$ & $1$ &  &  &  & \tabularnewline
\hline 
$5$ & $55$ & $55$ & $25$ & $7$ & $1$ &  &  & \tabularnewline
\hline 
$6$ & $273$ & $273$ & $130$ & $42$ & $9$ & $1$ &  & \tabularnewline
\hline 
$7$ & $1428$ & $1428$ & $700$ & $245$ & $63$ & $11$ & $1$ & \tabularnewline
\hline 
$8$ & $7752$ & $7752$ & $3876$ & $1428$ & $408$ & $88$ & $13$ & $1$\tabularnewline
\hline 
\end{tabular}
\par\end{centering}
\caption{\label{tb-1ret}The number of box paths of size $n$ with $j$ returns}
\end{table}
\begin{table}[t]
\begin{centering}
\begin{tabular}{|c|>{\centering}p{1.2cm}|>{\centering}p{1.2cm}|>{\centering}p{1.2cm}|>{\centering}p{1.2cm}|>{\centering}p{1.2cm}|>{\centering}p{1.2cm}|>{\centering}p{1.2cm}|>{\centering}p{1.2cm}|}
\hline 
\diagbox[height=0.9cm]{$n$}{$j$} & $1$ & $2$ & $3$ & $4$ & $5$ & $6$ & $7$ & $8$\tabularnewline
\hline 
$1$ & $1$ &  &  &  &  &  &  & \tabularnewline
\hline 
$2$ & $2$ & $1$ &  &  &  &  &  & \tabularnewline
\hline 
$3$ & $9$ & $5$ & $1$ &  &  &  &  & \tabularnewline
\hline 
$4$ & $52$ & $30$ & $8$ & $1$ &  &  &  & \tabularnewline
\hline 
$5$ & $340$ & $200$ & $60$ & $11$ & $1$ &  &  & \tabularnewline
\hline 
$6$ & $2394$ & $1425$ & $456$ & $99$ & $14$ & $1$ &  & \tabularnewline
\hline 
$7$ & $17710$ & $10626$ & $3542$ & $847$ & $147$ & $17$ & $1$ & \tabularnewline
\hline 
$8$ & $135720$ & $81900$ & $28080$ & $7150$ & $1400$ & $204$ & $20$ & $1$\tabularnewline
\hline 
\end{tabular}
\par\end{centering}
\caption{\label{tb-2ret}The number of 2-box paths of size $n$ with $j$ returns}
\end{table}

\begin{rem}
Notice that each row of Tables \ref{tb-1ret}\textendash \ref{tb-2ret} is (weakly) decreasing. This observation can be confirmed by calculating
\[
\grave{f}_{k,n,j}-\grave{f}_{k,n,j+1}=\frac{((k+1)j-2)((k+2)n-j-3)!}{((k+1)n-2)!(n-j)!},
\]
which is non-negative for all $k\geq1$, $n\geq2$, and $1\leq j\leq n-1$. There is also a simple combinatorial proof for this fact by way of an injection from paths in $\mathcal{B}_{n}^{(k)}$ with $j+1$ returns to those with $j$ returns. Given $\mu\in\mathcal{B}_{n}^{(k)}$ with $j+1$ returns, we can obtain a path $\mu^{\prime}\in\mathcal{B}_{n}^{(k)}$ with $j$ returns by deleting the first $U$ step after the first return and inserting it at the very beginning of the path. Then the position where $\mu$ had its first return is where $\mu^{\prime}$ has its first return to the line $y=1$, and so we can recover $\mu$ from $\mu^{\prime}$ by deleting the first $U$ step and inserting it immediately after the first return to $y=1$. See Figure \ref{f-retinj} for an example. Note that this reverse procedure is not well-defined in general, since the first return to $y=1$ might be in the middle of a $D^{k}LD$-factor or the ending $D^{k}L$-factor. (When $k=0$, this happens when our Dyck path $\mu^{\prime}$ begins with a $UD$-factor.) Thus, this injection is not a bijection.
\begin{figure}
\noindent \begin{centering}
\begin{tikzpicture}[scale=0.4] 
\draw[pathcolorlight] (0,0) -- (17,0); 
\drawlinedots{0,1,2,3,4,5,6,7,6,7,8,9,10,11,12,11,12,13,14,15,16,17,16}{0,1,2,3,4,5,4,3,2,1,2,3,4,3,2,1,0,1,2,3,2,1,0}
\end{tikzpicture}
\hspace{0.3cm}
$\boldsymbol{\longleftrightarrow}$
\hspace{0.3cm}
\begin{tikzpicture}[scale=0.4] 
\draw[pathcolorlight] (0,0) -- (17,0); 
\drawlinedots{0,1,2,3,4,5,6,7,8,7,8,9,10,11,12,13,12,13,14,15,16,17,16}{0,1,2,3,4,5,6,5,4,3,2,3,4,5,4,3,2,1,2,3,2,1,0}
\end{tikzpicture}
\par\end{centering}
\caption{\label{f-retinj}A $2$-box path with 2 returns and the corresponding
$2$-box path with 1 return}
\end{figure}
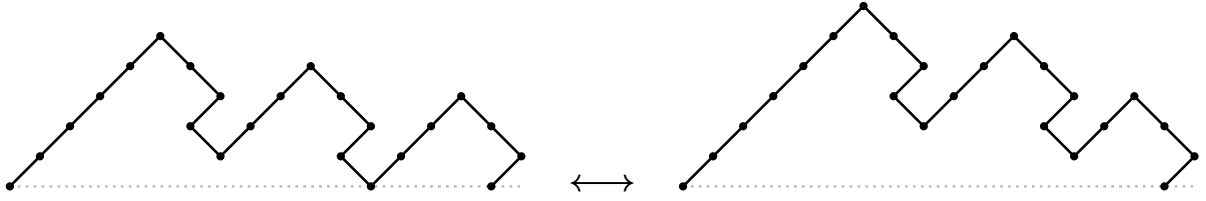

However, when $k=1$ and $j=1$, we claim that this injection is in fact a bijection. Let $\mu^{\prime}\in\mathcal{B}_{n}^{(1)}$ (for some $n\geq2$) have only 1 return. The first return to the line $y=1$ cannot be followed by a $L$ step, because then we are forced to have $\mu^{\prime}=UUDL$ which is of size $n=1$. Moreover, the first return to the line $y=1$ cannot be preceded by a $L$ step; otherwise, that $L$ step would be followed by a $D$ step which would end the path (since $\mu^{\prime}$ has only 1 return), yet box paths must end with an $L$. Therefore, the first return to $y=1$ is not in the middle of a $DLD$-factor or the ending $DL$-factor, so the reverse procedure described above is well-defined for all paths in $\mathcal{B}_{n}^{(1)}$ with 1 return. Hence $\grave{f}_{1,n,1}=\grave{f}_{1,n,2}$ for all $n\geq2$, as suggested by the $j=1$ and $j=2$ columns of Table \ref{tb-1ret}. Similarly, in the $k=0$ case, a Dyck path of semilength $n\geq2$ with only 1 return cannot begin with a $UD$-factor, so we have $\grave{f}_{0,n,2}=\grave{f}_{0,n,3}$ for all $n\geq3$.
\end{rem}

Recall that $f_{k,n}$ denotes the total number of $k$-box paths of size $n$. Using our formulas for $f_{k,n}$ and $\grave{f}_{k,n,j}$ from Theorems \ref{t-kbox} and \ref{t-kboxret}, together with the \texttt{sum} command in Maple, we were able to find closed-form formulas for the expressions
\[
\frac{\sum_{j=1}^{n}j\grave{f}_{k,n,j}}{f_{k,n}}\quad\text{and}\quad\frac{\sum_{j=1}^{n}j^{2}\grave{f}_{k,n,j}}{f_{k,n}}-\left(\frac{\sum_{j=1}^{n}j\grave{f}_{k,n,j}}{f_{k,n}}\right)^{2}
\]
giving the average value and variance, respectively, for the number of returns.

\begin{cor}
\label{c-retstats}
Among $k$-box paths of size $n$, the average number of returns is 
\[
\frac{(k+2)((k+2)n-1)}{(k+1)((k+1)n+1)}
\]
and the variance in the number of returns is
\[
\frac{(k+2)((k+2)n-1)((2k+1)(k+1)n-2)(n-1)}{(k+1)^{2}((k+1)n+1)^{2}((k+1)n+2)}.
\]
\end{cor}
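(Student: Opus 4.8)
The plan is to read the moments of the return statistic straight off the bivariate generating function $\grave{F}_k$ of Lemma~\ref{l-retfeq}, via the factorial-moment principle. Writing $\grave{F}_k=\sum_{n,j}\grave{f}_{k,n,j}t^jx^n$, one has $[x^n]\,\grave{F}_k|_{t=1}=f_{k,n}$, $[x^n]\,\partial_t\grave{F}_k|_{t=1}=\sum_j j\grave{f}_{k,n,j}$, and $[x^n]\,\partial_t^2\grave{F}_k|_{t=1}=\sum_j j(j-1)\grave{f}_{k,n,j}$. The average and variance of the number of returns over $\mathcal{B}_n^{(k)}$ are then
\[
\frac{[x^n]\,\partial_t\grave{F}_k|_{t=1}}{f_{k,n}}\qquad\text{and}\qquad\frac{[x^n]\,(\partial_t^2+\partial_t)\grave{F}_k|_{t=1}}{f_{k,n}}-\left(\frac{[x^n]\,\partial_t\grave{F}_k|_{t=1}}{f_{k,n}}\right)^{2},
\]
so the whole problem reduces to three coefficient extractions.

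The key simplification comes from the functional equation for $G_{k+1}$. Setting $t=1$ in the relation $\grave{G}_k=1/(1-txG_k^k)$ derived in the proof of Lemma~\ref{l-retfeq} gives $G_k=1+xG_k^{k+1}$; writing $C\coloneqq G_{k+1}$, this reads $C=1+xC^{k+2}$, equivalently $1-xC^{k+1}=1/C$. Substituting $G_{k+1}=C$ into $\grave{F}_k=txC^k/(1-txC^{k+1})$, differentiating in $t$ (with $C$ a function of $x$ alone), evaluating at $t=1$, and applying $1-xC^{k+1}=1/C$ repeatedly, I expect the expressions to collapse to the monomials
\[
\grave{F}_k|_{t=1}=xC^{k+1},\qquad \partial_t\grave{F}_k|_{t=1}=xC^{k+2},\qquad \partial_t^2\grave{F}_k|_{t=1}=2x^2C^{2k+4}.
\]
Since each is a power of $x$ times a power of $C$, I can read off the coefficients from the Fuss--Catalan formula~(\ref{e-rthpower}). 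This gives $f_{k,n}=[x^{n-1}]C^{k+1}$, together with $S_1\coloneqq\sum_j j\grave{f}_{k,n,j}=[x^{n-1}]C^{k+2}=\frac{1}{n}\binom{(k+2)n}{n-1}$ and $S_2'\coloneqq\sum_j j(j-1)\grave{f}_{k,n,j}=2[x^{n-2}]C^{2k+4}=\frac{4}{n}\binom{(k+2)n}{n-2}$.

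What remains is algebra, with $S_0\coloneqq f_{k,n}$. The average falls out quickly: simplifying the ratio $S_1/S_0$ of binomial coefficients yields $\frac{(k+2)((k+2)n-1)}{(k+1)((k+1)n+1)}$. For the variance I would assemble $\mathrm{Var}=(S_2'+S_1)/S_0-(S_1/S_0)^2$. The main obstacle is keeping this rational expression under control, but it is tamed by one clean observation: writing the average as $A/B$ with $A=(k+2)((k+2)n-1)$ and $B=(k+1)((k+1)n+1)$, a one-line computation gives $B-A=-(2k+3)(n-1)$, so that after clearing denominators the numerator factors as $(n-1)\bigl((2k+1)(k+1)n-2\bigr)$ and the whole expression reduces to exactly the stated formula. (Alternatively, one can feed the closed forms for $\grave{f}_{k,n,j}$ and $f_{k,n}$ into a computer algebra system, as the authors indicate; the generating-function route above simply replaces the hypergeometric summations by a coefficient extraction from the functional equation.)
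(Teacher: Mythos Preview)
Your argument is correct. The identities $\grave{F}_k|_{t=1}=xC^{k+1}$, $\partial_t\grave{F}_k|_{t=1}=xC^{k+2}$, and $\partial_t^2\grave{F}_k|_{t=1}=2x^2C^{2k+4}$ follow exactly as you say from $\grave{F}_k=txC^k/(1-txC^{k+1})$ together with $1-xC^{k+1}=1/C$, and the coefficient extractions via~(\ref{e-rthpower}) give the stated values of $S_0$, $S_1$, and $S_2'$. Your algebraic reduction of the variance is also right: with $A=(k+2)((k+2)n-1)$ and $B=(k+1)((k+1)n+1)$ one gets $S_2'/S_0=4A(n-1)/(B((k+1)n+2))$, and then $B-A=-(2k+3)(n-1)$ together with $4B-(2k+3)((k+1)n+2)=(2k+1)(k+1)n-2$ collapses everything to the announced formula.

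This is a genuinely different route from the paper's. The paper obtains the corollary by plugging the closed form $\grave{f}_{k,n,j}$ from Theorem~\ref{t-kboxret} and $f_{k,n}$ from Theorem~\ref{t-kbox} into $\sum_j j\grave{f}_{k,n,j}$ and $\sum_j j^2\grave{f}_{k,n,j}$ and letting Maple evaluate the resulting hypergeometric sums. Your approach bypasses Theorem~\ref{t-kboxret} entirely: you work directly with the bivariate generating function of Lemma~\ref{l-retfeq}, differentiate in $t$, and exploit the functional equation $C=1+xC^{k+2}$ to reduce each factorial-moment generating function to a single power of $C$, so that the Fuss--Catalan formula~(\ref{e-rthpower}) applies immediately. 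The payoff is a fully human-checkable derivation with no symbolic summation; the cost is that you never see the exact distribution $\grave{f}_{k,n,j}$ along the way. (You flag this tradeoff yourself in your closing parenthetical.)
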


As $n\rightarrow\infty$, the average tends to $(k+2)^{2}/(k+1)^{2}$ and the variance to $(2k+1)(k+2)^{2}/(k+1)^{4}$, which are approximately 1 and 0, respectively, when $k$ is large.

We note that the average number of returns of a $k$-box path is one more than the number of returns to the line $y=-k$ of the $(k+1)_{k}$-Dyck path obtained via the bijection from Section \ref{s-selkirk}. In fact, Corollary \ref{c-retstats} is equivalent to the mean and variance formulas (specialized appropriately) from Selkirk's Propositions 6.2.1 and 6.2.2 \cite{Selkirk2019}. (Selkirk did not derive an exact enumeration formula for $k_{t}$-Dyck paths with a given number of returns to $y=-t$.)

\section{Counting \texorpdfstring{$k$}{k}-box paths by long ascents} \label{sec-7}

As mentioned in the introduction, the number of peaks ($UD$-factors) is another statistic whose distribution over Dyck paths is well known. However, a $k$-box path of size $n$ has exactly $n$ peaks, so simply counting $k$-box paths by peaks does not yield anything new. Another parameter is the number of \textit{ascents}: maximal consecutive subsequences of up steps. Ascents and peaks coincide in Dyck paths and skew Dyck paths\textemdash every ascent ends in a peak and every peak is at the end of an ascent\textemdash although they have different distributions on some other families of lattice paths like Motzkin paths. Since it is also true that a $k$-box path of size $n$ has exactly $n$ ascents, let us instead count $k$-box paths by the number of \textit{long ascents}: ascents consisting of at least two up steps.

Let $\acute{f}_{k,n,j}$ be the number of $k$-box paths of size $n$ with exactly $j$ long ascents, and $\acute{g}_{k,n,j}$ the number of augmented $k$-Dyck paths of size $n$ with exactly $j$ long ascents. Then define the bivariate generating functions
\[
\acute{F}_{k}(t,x)\coloneqq\sum_{n=1}^{\infty}\sum_{j=1}^{n}\acute{f}_{k,n,j}t^{j}x^{n}\quad\text{\text{and}}\quad\acute{G}_{k}(t,x)\coloneqq\sum_{n=0}^{\infty}\sum_{j=1}^{n}\acute{g}_{k,n,j}t^{j}x^{n};
\]
we write $\acute{F}_{k}=\acute{F}_{k}(t,x)$ and $\acute{G}_{k}=\acute{G}_{k}(t,x)$ for simplicity.
\begin{lem}
We have the functional equations 
\begin{align}
\acute{F}_{k} & =x\acute{G}_{k+1}^{k}(\acute{G}_{k+1}-1+t)\label{e-feqF}
\end{align}
and 
\begin{equation}
\acute{G}_{k}=1+x\acute{G}_{k}^{k}(\acute{G}_{k}-1+t).\label{e-feqG}
\end{equation}
\end{lem}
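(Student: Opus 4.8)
The plan is to read off both functional equations from recursive decompositions we already have in hand: the first-return decomposition of augmented $k$-Dyck paths recalled in the proof of Lemma~\ref{l-retfeq}, namely $\mu=U\mu_{1}U\mu_{2}\cdots U\mu_{k}UD^{k-1}LD\mu_{k+1}$ with each $\mu_{i}\in\hat{\mathcal D}^{(k)}$, and the decomposition $\mu=\mu_{1}U\mu_{2}U\cdots\mu_{k+1}UD^{k}L$ of $k$-box paths with each $\mu_{i}\in\hat{\mathcal D}^{(k+1)}$ from Lemma~\ref{l-tboxdecomp}. In each case the size is plainly additive across the decomposition, with the one terminal block ($UD^{k-1}LD$ or $UD^{k}L$) contributing a single extra unit; this already recovers the $t=1$ specializations $G_{k}=1+xG_{k}^{k+1}$ and $\sum_{\mu}x^{|\mu|}=xG_{k+1}^{k+1}$. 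The entire content of the lemma therefore lies in tracking the \emph{long-ascent} statistic $\mathrm{la}(\cdot)$ across the joints, which is subtle because up steps can merge: the number of long ascents is not simply additive over the sub-paths.

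The key structural fact I would isolate first is that every nonempty augmented $m$-Dyck path begins with at least two up steps (its first ascent has length $a_{1}\geq m+1\geq2$) and ends with a down step. The consequence is clean merging behavior: a separator up step placed adjacent to a nonempty sub-path on the side where that sub-path begins merely lengthens an already-long first ascent, changing no count, while a separator placed against the ending down step of a sub-path opens a fresh ascent. Using this I would establish the counting identity
\[
\mathrm{la}(\mu)=\mathrm{la}(\mu_{1})+\cdots+\mathrm{la}(\mu_{k+1})+\delta,
\]
where $\delta=1$ if $\mu_{\ast}$ is empty and $\delta=0$ otherwise, and $\mu_{\ast}$ is the sub-path whose trailing separator abuts the terminal descent block ($\mu_{\ast}=\mu_{k}$ in the augmented decomposition, $\mu_{\ast}=\mu_{k+1}$ in the box decomposition). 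The point is that each separator up step is absorbed into the first (long) ascent of the next nonempty sub-path to its right, so the only maximal run of separator steps that can stand alone is the one immediately left of the terminal descent block; that run has length $r+1$, where $r$ is the number of trailing empty sub-paths, hence it is a long ascent exactly when $\mu_{\ast}$ is empty.

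Granting this identity, both equations fall out by the usual product-of-generating-functions argument. Each sub-path other than $\mu_{\ast}$ contributes a factor of $\acute{G}_{k}$ (respectively $\acute{G}_{k+1}$), while the distinguished slot $\mu_{\ast}$ contributes $\acute{G}-1+t$, since an empty $\mu_{\ast}$ is weighted by $t$ through the $\delta$ term and a nonempty one contributes $\acute{G}-1$. Multiplying by the $x$ recording the terminal block, and adding the empty path's contribution of $1$ in the augmented case, yields $\acute{G}_{k}=1+x\acute{G}_{k}^{k}(\acute{G}_{k}-1+t)$ and $\acute{F}_{k}=x\acute{G}_{k+1}^{k}(\acute{G}_{k+1}-1+t)$, which are \eqref{e-feqG} and \eqref{e-feqF}.

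I expect the main obstacle to be the case analysis behind the counting identity: one must verify carefully that interior runs of consecutive empty sub-paths never spawn a new long ascent (they are always swallowed by the first ascent of the following nonempty sub-path) and that the single exceptional run is precisely the one adjacent to the terminal descent. A good sanity check I would run is the $k=2$ computation of $\acute{G}_{2}$ to order $x^{2}$, where the equation forces the coefficient $t+2t^{2}$, matching a direct enumeration of the three augmented $2$-Dyck paths of size $2$ (two with two long ascents and one with a single long ascent).
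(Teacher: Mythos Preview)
Your proposal is correct and follows essentially the same approach as the paper: both arguments use the first-return decomposition of augmented $k$-Dyck paths and the decomposition of Lemma~\ref{l-tboxdecomp}, and both identify the single distinguished slot ($\mu_{k}$ in the augmented case, $\mu_{k+1}$ in the box case) whose emptiness contributes the extra long ascent. The paper simply asserts the dichotomy ``if $\mu_{k}$ is nonempty then long ascents correspond, otherwise there is exactly one more,'' whereas you supply the underlying justification via the absorption argument (separator $U$'s merge into the already-long first ascent of the next nonempty sub-path), so your write-up is more detailed but not different in substance.
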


\begin{proof}
We begin by proving (\ref{e-feqG}). Recall that every augmented $k$-Dyck path $\mu$ is either the empty path (which contributes the 1 on the right-hand side) or can be uniquely decomposed as
\[
\mu=U\mu_{1}U\mu_{2}U\mu_{3}\cdots U\mu_{k}UD^{k-1}LD\mu_{k+1}
\]
where the $\mu_{1},\dots,\mu_{k+1}$ are augmented $k$-Dyck paths. Observe the following:
\begin{itemize}
\item The number of $UD^{k-1}LD$-factors in $\mu$ is one more than the total number of such factors in the $\mu_{1},\dots,\mu_{k+1}$.
\item If $\mu_{k}$ is nonempty, then long ascents of $\mu$ are in correspondence with long ascents of the $\mu_{1},\dots,\mu_{k+1}$. Otherwise, if $\mu_{k}$ is empty, then the number of long ascents in $\mu$ is exactly one more than the total number of long ascents in the $\mu_{1},\dots,\mu_{k+1}$.
\end{itemize}
Thus we have (\ref{e-feqG}). Equation (\ref{e-feqF}) follows from the decomposition given by Lemma \ref{l-tboxdecomp} and similar reasoning.
\end{proof}

\begin{thm}
\label{t-lasc}
The number of $k$-box paths of size $n$ with exactly
$j$ long ascents is equal to
\[
\acute{f}_{k,n,j}=\frac{1}{j}{(k+1)n-2 \choose j-1}{n-1 \choose j-1}.
\]
\end{thm}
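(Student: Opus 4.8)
The plan is to compute $[t^{j}x^{n}]\acute{F}_{k}$ by Lagrange inversion, but first to simplify $\acute{F}_{k}$ using the functional equation for $\acute{G}_{k+1}$. Write $G\coloneqq\acute{G}_{k+1}$, so that Equation~(\ref{e-feqG}) reads $G=1+xG^{k+1}(G-1+t)$, which rearranges to $xG^{k+1}(G-1+t)=G-1$. Dividing through by $G$ gives $xG^{k}(G-1+t)=(G-1)/G$, and comparing with Equation~(\ref{e-feqF}) yields the clean identity
\[
\acute{F}_{k}=\frac{G-1}{G}.
\]
This is the one genuinely useful observation; everything after it is routine coefficient extraction.

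Next I would introduce $H\coloneqq G-1$, which is a power series in $x$ with $H(0)=0$. By~(\ref{e-feqG}), $H$ satisfies
\[
H=x(1+H)^{k+1}(H+t),
\]
an equation of the form $H=x\Phi(H)$ with $\Phi(H)=(1+H)^{k+1}(H+t)$, where $\Phi(0)=t\neq0$ (treating $t$ as a constant), so Lagrange inversion applies. Since $\acute{F}_{k}=H/(1+H)$, I take $\Psi(H)=H/(1+H)$, for which $\Psi'(H)=(1+H)^{-2}$, and obtain
\[
[x^{n}]\acute{F}_{k}=\frac{1}{n}[u^{n-1}]\,\Psi'(u)\Phi(u)^{n}=\frac{1}{n}[u^{n-1}](1+u)^{(k+1)n-2}(u+t)^{n}.
\]

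Finally I would grade by the power of $t$: since $[t^{j}](u+t)^{n}=\binom{n}{j}u^{n-j}$, extracting the coefficient of $t^{j}$ leaves
\[
\acute{f}_{k,n,j}=\frac{1}{n}\binom{n}{j}[u^{j-1}](1+u)^{(k+1)n-2}=\frac{1}{n}\binom{n}{j}\binom{(k+1)n-2}{j-1},
\]
and the elementary identity $\tfrac{1}{n}\binom{n}{j}=\tfrac{1}{j}\binom{n-1}{j-1}$ rewrites this in the stated form. The only step that is not mechanical is the simplification $\acute{F}_{k}=(G-1)/G$; after that, the main things to be careful about are shifting to $H=G-1$ so that the series vanishes at the origin (a prerequisite for the Lagrange inversion formula) and correctly tracking the $t$-grading through $(u+t)^{n}$, both of which are straightforward.
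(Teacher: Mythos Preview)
Your proof is correct and takes a genuinely cleaner route than the paper's. Both arguments apply Lagrange inversion to the same functional equation $H=x(1+H)^{k+1}(H+t)$ for $H=\acute{G}_{k+1}-1$, but they differ in what function of $H$ they invert. The paper chooses $\phi(u)=(1+u)^{r}$, thereby obtaining a general formula for $[t^{j}x^{n}]\,\acute{G}_{k+1}^{r}$, and then expands $\acute{F}_{k}=x\acute{G}_{k+1}^{k+1}-x\acute{G}_{k+1}^{k}+xt\acute{G}_{k+1}^{k}$ as a three-term sum whose simplification is delegated to Maple. You instead first observe the identity $\acute{F}_{k}=(G-1)/G$, which follows immediately by dividing the functional equation~(\ref{e-feqG}) for $G=\acute{G}_{k+1}$ by $G$ and comparing with~(\ref{e-feqF}); this lets you apply Lagrange inversion once with $\Psi(u)=u/(1+u)$ and read off the answer directly, with no algebraic cleanup needed. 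The paper's detour does buy something---the intermediate formula~(\ref{e-rpower}) for general $r$ specializes at $r=1$ to give $\acute{g}_{k,n,j}$---but for the theorem as stated your argument is shorter and avoids computer algebra.
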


While we are mainly interested in the numbers $\acute{f}_{k,n,j}$, we note that a formula for $\acute{g}_{k,n,j}$ is obtained by specializing (\ref{e-rpower}) below at $r=1$.

\begin{proof}
Let $\tilde{G}_{k}\coloneqq\acute{G}_{k}-1$. Then substituting $\acute{G}_{k}=\tilde{G}_{k}+1$ into (\ref{e-feqG}) and simplifying gives us 
\[
\tilde{G}_{k}=x(\tilde{G}_{k}+1)^{k}(\tilde{G}_{k}+t)=xR(\tilde{G}_{k})
\]
where $R(u)=(u+1)^{k}(u+t)$. Let $\phi(u)=(u+1)^{r}$. Applying Lagrange inversion \cite{Gessel2016}, we obtain {\allowdisplaybreaks
\begin{align}
[t^{j}x^{n}]\,\acute{G}_{k}^{r} & =[t^{j}x^{n}]\,(\tilde{G}_{k}+1)^{r}\nonumber \\
 & =[t^{j}x^{n}]\,\phi(\tilde{G}_{k})\nonumber \\
 & =\frac{1}{n}\,[t^{j}u^{n-1}]\,\phi^{\prime}(u)R(u)^{n}\nonumber \\
 & =\frac{r}{n}\,[t^{j}u^{n-1}]\,(u+1)^{kn+r-1}(u+t)^{n}\nonumber \\
 & =\frac{r}{n}\,[t^{j}u^{n-1}]\,\sum_{i=0}^{kn+r-1}{kn+r-1 \choose i}u^{i}\sum_{l=0}^{n}{n \choose l}u^{n-l}t^{l}\nonumber \\
 & =\frac{r}{n}\,[t^{j}u^{n-1}]\,\sum_{l=0}^{n}\sum_{i=0}^{kn+r-1}{kn+r-1 \choose i}{n \choose l}t^{l}u^{n-l+i}\nonumber \\
 & =\frac{r}{n}\,[t^{j}u^{n-1}]\,\sum_{l=0}^{n}\sum_{m=n-l}^{(k+1)n+r-l-1}{kn+r-1 \choose m+l-n}{n \choose l}t^{l}u^{m}\nonumber \\
 & =\frac{r}{n}{kn+r-1 \choose j-1}{n \choose j}.\label{e-rpower}
\end{align}
}Now, from (\ref{e-feqF}), we have 
\begin{align*}
\acute{F}_{k} & =x\acute{G}_{k+1}^{k+1}-x\acute{G}_{k+1}^{k}+xt\acute{G}_{k+1}^{k}.
\end{align*}
Using (\ref{e-rpower}), we obtain
\begin{align*}
[t^{j}x^{n}]\,\acute{F}_{k} & =[t^{j}x^{n-1}]\,\acute{G}_{k+1}^{k+1}-[t^{j}x^{n-1}]\,\acute{G}_{k+1}^{k}+[t^{j-1}x^{n-1}]\,\acute{G}_{k+1}^{k}\\
 & =\frac{k+1}{n-1}{(k+1)n-1 \choose j-2}{n-1 \choose j-1}-\frac{k}{n-1}{(k+1)n-2 \choose j-2}{n-1 \choose j-1}\\
 & \qquad\qquad\qquad\qquad\qquad\qquad\qquad\qquad\qquad\qquad+\frac{k}{n-1}{(k+1)n-2 \choose j-1}{n-1 \choose j};
\end{align*}
this simplifies to the desired formula for $\acute{f}_{k,n,j}$ (as can be verified using Maple).
\end{proof}

Observe that the $\acute{f}_{k,n,j}$ specialized at $k=0$ are Narayana numbers; more precisely, we have $\acute{f}_{0,n,j}=N_{n-1,j}$ for all $n\geq2$ and $1\leq j\leq n$. This is because, in the $k=0$ case, all ascents are long ascents, and ascents in Dyck paths correspond directly to peaks (as noted earlier).

The arrays of the numbers $\acute{f}_{k,n,j}$ for $k=1$ and $k=2$ are displayed in Tables \ref{tb-1sasc}\textendash \ref{tb-2sasc}.
\begin{table}[t]
\begin{centering}
\begin{tabular}{|c|>{\centering}p{1cm}|>{\centering}p{1cm}|>{\centering}p{1cm}|>{\centering}p{1cm}|>{\centering}p{1cm}|>{\centering}p{1cm}|>{\centering}p{1cm}|>{\centering}p{1cm}|}
\hline 
\diagbox[height=0.9cm]{$n$}{$j$} & $1$ & $2$ & $3$ & $4$ & $5$ & $6$ & $7$ & $8$\tabularnewline
\hline 
$1$ & $1$ &  &  &  &  &  &  & \tabularnewline
\hline 
$2$ & $1$ & $1$ &  &  &  &  &  & \tabularnewline
\hline 
$3$ & $1$ & $4$ & $2$ &  &  &  &  & \tabularnewline
\hline 
$4$ & $1$ & $9$ & $15$ & $5$ &  &  &  & \tabularnewline
\hline 
$5$ & $1$ & $16$ & $56$ & $56$ & $14$ &  &  & \tabularnewline
\hline 
$6$ & $1$ & $25$ & $150$ & $300$ & $210$ & $42$ &  & \tabularnewline
\hline 
$7$ & $1$ & $36$ & $330$ & $1100$ & $1485$ & $792$ & $132$ & \tabularnewline
\hline 
$8$ & $1$ & $49$ & $637$ & $3185$ & $7007$ & $7007$ & $3003$ & $429$\tabularnewline
\hline 
\end{tabular}
\par\end{centering}
\caption{\label{tb-1sasc}The number of box paths of size $n$ with $j$ long
ascents}
\end{table}
\begin{table}[t]
\begin{centering}
\begin{tabular}{|c|>{\centering}p{1cm}|>{\centering}p{1cm}|>{\centering}p{1cm}|>{\centering}p{1cm}|>{\centering}p{1cm}|>{\centering}p{1cm}|>{\centering}p{1cm}|>{\centering}p{1cm}|}
\hline 
\diagbox[height=0.9cm]{$n$}{$j$} & $1$ & $2$ & $3$ & $4$ & $5$ & $6$ & $7$ & $8$\tabularnewline
\hline 
$1$ & $1$ &  &  &  &  &  &  & \tabularnewline
\hline 
$2$ & $1$ & $2$ &  &  &  &  &  & \tabularnewline
\hline 
$3$ & $1$ & $7$ & $7$ &  &  &  &  & \tabularnewline
\hline 
$4$ & $1$ & $15$ & $45$ & $30$ &  &  &  & \tabularnewline
\hline 
$5$ & $1$ & $26$ & $156$ & $286$ & $143$ &  &  & \tabularnewline
\hline 
$6$ & $1$ & $40$ & $400$ & $1400$ & $1820$ & $728$ &  & \tabularnewline
\hline 
$7$ & $1$ & $57$ & $855$ & $4845$ & $11628$ & $11628$ & $3876$ & \tabularnewline
\hline 
$8$ & $1$ & $77$ & $1617$ & $13475$ & $51205$ & $92169$ & $74613$ & $21318$\tabularnewline
\hline 
\end{tabular}
\par\end{centering}
\caption{\label{tb-2sasc}The number of 2-box paths of size $n$ with $j$
long ascents}
\end{table}

\begin{rem}
Let us record and discuss a few observations from Tables \ref{tb-1sasc}\textendash \ref{tb-2sasc}.
\begin{itemize}
\item The main diagonal of Table \ref{tb-1sasc} consists of Catalan numbers and that of Table \ref{tb-2sasc} enumerates box paths. In general, the numbers
\[
\acute{f}_{k+1,n,n}=\frac{1}{n}{(k+2)n-2 \choose n-1}=\frac{1}{(k+2)n-1}{(k+2)n-1 \choose n}=f_{k,n}
\]
count $k$-box paths of size $n$. In fact, there is a simple bijection between $k$-box paths of size $n$ and $(k+1)$-box paths of size $n$ with all ascents long: we replace every $UD^{k}L$-factor with a $UUD^{k+1}L$-factor. (If $k=0$, then we replace each $D$ with a $UUDLD$ and then append a $UUDL$ to the end of the path.)
\item The $j=2$ column of Table \ref{tb-1sasc} consists of perfect squares and that of Table \ref{tb-2sasc} consists of second pentagonal numbers. These are the $k=4,5$ cases of the \textit{second $k$-gonal numbers} \cite[Section 1.8]{Deza2012}, which are defined by taking the formula
\[
\frac{n}{2}((k-2)n-(k-4))
\]
for the ordinary $k$-gonal numbers and replacing $n$ with $-n$. More generally, we have 
\begin{align*}
\acute{f}_{k,n+1,2} & =\frac{1}{2}((k+1)(n+1)-2)n\\
 & =\frac{n}{2}((k+3)-4+((k+3)-2)n)
\end{align*}
which forms the sequence of second $(k+3)$-gonal numbers. The second $k$-gonal numbers have been well studied from the perspective of number theory, but we are not aware of any previous combinatorial interpretation for these numbers that are valid for general values of $k$. Hence, this result seems interesting enough for us to state it as a theorem below.
\item Table \ref{tb-1sasc} consists of three pairs of consecutive repeating numbers, each three rows apart: $(1,1)$, $(56,56)$, and $(7007,7007)$. Similarly, Table \ref{tb-2sasc} has $(7,7)$ and $(11628,11628)$, which are four rows apart. In general, we have 
\[
\acute{f}_{k,(k+2)i-1,(k+1)i-1}=\acute{f}_{k,(k+2)i-1,(k+1)i}
\]
for all $k,i\geq1$, as can be checked using Theorem \ref{t-lasc}. Is there a bijective proof?
\end{itemize}
\end{rem}

\begin{thm}
For all $k\geq3$ and $n\geq1$, the $n$th second $k$-gonal number is equal to the number $\acute{f}_{k-3,n+1,2}$ of $(k-3)$-box paths of size $n+1$ with exactly two long ascents.
\end{thm}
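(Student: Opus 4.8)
The plan is to derive this directly from the closed formula for $\acute{f}_{k,n,j}$ in Theorem~\ref{t-lasc}, since the statement is really just a reindexing of the second bullet in the preceding remark (which identified $\acute{f}_{k,n+1,2}$ with the second $(k+3)$-gonal number). First I would substitute $k-3$ for $k$, $n+1$ for $n$, and $j=2$ into
\[
\acute{f}_{k,n,j}=\frac{1}{j}\binom{(k+1)n-2}{j-1}\binom{n-1}{j-1},
\]
which gives
\[
\acute{f}_{k-3,n+1,2}=\frac{1}{2}\binom{(k-2)(n+1)-2}{1}\binom{n}{1}.
\]
Because both binomial coefficients have lower index $1$, this collapses to $\tfrac{n}{2}\bigl((k-2)(n+1)-2\bigr)$, and expanding the inner expression as $(k-2)n+(k-4)$ yields the compact form $\tfrac{n}{2}\bigl((k-2)n+(k-4)\bigr)$.

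Next I would compute the $n$th second $k$-gonal number straight from its definition. Replacing $n$ by $-n$ in the ordinary $k$-gonal formula $\tfrac{n}{2}\bigl((k-2)n-(k-4)\bigr)$ produces
\[
\frac{-n}{2}\bigl(-(k-2)n-(k-4)\bigr)=\frac{n}{2}\bigl((k-2)n+(k-4)\bigr),
\]
which is exactly the expression obtained above for $\acute{f}_{k-3,n+1,2}$. Comparing the two closed forms establishes the claimed equality.

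Finally, I would point out that the hypothesis $k\ge 3$ is precisely what is needed for the box-path parameter $k-3$ to be a nonnegative integer, so that $\acute{f}_{k-3,n+1,2}$ is meaningful; in the boundary case $k=3$ the parameter is $0$, and the quantity is read through the ``$0$-box path'' convention of Remark~\ref{r-k=00003D0}, under which Theorem~\ref{t-lasc} remains valid. The computation is routine and presents no genuine obstacle; the only point demanding care is tracking the sign flip in the $(k-4)$ term introduced by the ``replace $n$ by $-n$'' operation defining the second gonal numbers, since it is precisely this flip that turns the $-(k-4)$ of the ordinary gonal numbers into the $+(k-4)$ that matches the box-path count.
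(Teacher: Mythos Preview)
Your proof is correct and is essentially the paper's own argument: the paper does not supply a separate proof of this theorem, since the computation in the preceding remark (substituting into the closed formula of Theorem~\ref{t-lasc} and matching the result with the second $(k+3)$-gonal number formula) already establishes it, and your proposal simply carries out that same substitution with the reindexing $k\mapsto k-3$ built in.
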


\begin{rem}
It can be verified that 
\begin{multline*}
\quad(\acute{f}_{k,n,j})^{2}-\acute{f}_{k,n,j-1}\acute{f}_{k,n,j+1}\\
=\frac{n(2(k+1)n+k-(k+2)j)}{(n-j+1)j^{2}(j+1)((k+1)n+j)}{(k+1)n-2 \choose j-1}^{2}{n-1 \choose j-1}^{2}>0\quad
\end{multline*}
for all $n\geq2$, $k\geq0$, and $1<j<n$. In other words, for fixed $n$ and $k$, the sequences $\{\acute{f}_{k,n,j}\}_{0\leq j<n}$ are log-concave and thus unimodal. It would be interesting to find a combinatorial proof for log-concavity via constructing an appropriate injection.
\end{rem}

As with the case with returns, we can use our formulas for $\acute{f}_{k,n,j}$ and $f_{k,n}$ along with the \texttt{sum} command in Maple to obtain a closed expression for the average number of long ascents. However, we were unable to use Maple to evaluate the summation $\sum_{j=1}^{n}j^{2}\acute{f}_{k,n,j}$ needed for the variance. Instead, we evaluate this summation directly with the help of Vandermonde's identity 
\[
\sum_{j=0}^{r}{p \choose j}{q \choose r-j}={p+q \choose r}.
\]
This approach also allows us to evaluate $\sum_{j=1}^{n}j\acute{f}_{k,n,j}$ which is needed for the mean, and so we demonstrate both for the sake of completeness.

\begin{lem}
\label{l-lascsums}
We have 
\[
\sum_{j=1}^{n}j\acute{f}_{k,n,j}={(k+2)n-3 \choose n-1}\quad\text{and}\quad\sum_{j=1}^{n}j^{2}\acute{f}_{k,n,j}=\frac{(k+1)n^{2}-n-1}{(k+2)n-3}{(k+2)n-3 \choose n-1}.
\]
\end{lem}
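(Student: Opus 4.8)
The plan is to evaluate the two moment sums $\sum_{j=1}^{n}j\,\acute{f}_{k,n,j}$ and $\sum_{j=1}^{n}j^{2}\acute{f}_{k,n,j}$ directly using the explicit formula $\acute{f}_{k,n,j}=\frac{1}{j}\binom{(k+1)n-2}{j-1}\binom{n-1}{j-1}$ from Theorem~\ref{t-lasc}, reducing each to a single application of Vandermonde's identity as suggested. For the first sum, the factor of $j$ cancels the $1/j$ in $\acute{f}_{k,n,j}$, leaving
\[
\sum_{j=1}^{n}j\,\acute{f}_{k,n,j}=\sum_{j=1}^{n}\binom{(k+1)n-2}{j-1}\binom{n-1}{j-1}.
\]
The plan is then to reindex with $i=j-1$ and rewrite $\binom{n-1}{i}=\binom{n-1}{n-1-i}$ so that the two lower indices sum to a constant, putting the sum into the exact shape of Vandermonde's identity; this collapses it to $\binom{(k+1)n-2+(n-1)}{n-1}=\binom{(k+2)n-3}{n-1}$, which is the claimed closed form.

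For the second sum, the factor $j^{2}$ over the $1/j$ leaves a single factor of $j$, which I would split as $j=(j-1)+1$ to separate the sum into two pieces:
\[
\sum_{j=1}^{n}j^{2}\acute{f}_{k,n,j}=\sum_{j=1}^{n}(j-1)\binom{(k+1)n-2}{j-1}\binom{n-1}{j-1}+\sum_{j=1}^{n}\binom{(k+1)n-2}{j-1}\binom{n-1}{j-1}.
\]
The second piece is exactly the first moment sum already computed. For the first piece, after reindexing with $i=j-1$, I would absorb the factor $i$ into one of the binomials using the standard absorption identity $i\binom{n-1}{i}=(n-1)\binom{n-2}{i-1}$ (or the analogous identity applied to $\binom{(k+1)n-2}{i}$), which again produces a product of two binomials whose lower indices sum to a constant, so Vandermonde's identity applies once more. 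This yields a single binomial coefficient times a rational prefactor.

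The main obstacle I anticipate is not any one Vandermonde application but the final algebraic consolidation: after the two pieces are each reduced to closed form, they will be expressed in terms of binomial coefficients like $\binom{(k+2)n-4}{n-2}$ and $\binom{(k+2)n-3}{n-1}$ that must be combined over a common form and simplified to the single clean expression $\frac{(k+1)n^{2}-n-1}{(k+2)n-3}\binom{(k+2)n-3}{n-1}$. I would factor out $\binom{(k+2)n-3}{n-1}$ from both pieces by converting the smaller binomials into it via ratios such as $\binom{(k+2)n-4}{n-2}=\frac{(n-1)}{(k+2)n-3}\binom{(k+2)n-3}{n-1}$, collect the resulting rational coefficients over the common denominator $(k+2)n-3$, and verify the numerator simplifies to $(k+1)n^{2}-n-1$. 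This last step is routine but error-prone bookkeeping; since the statement notes it can be checked in Maple, I would present the moment-sum reductions in full and leave the terminal polynomial identity to a direct verification.
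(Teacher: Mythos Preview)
Your proposal is correct and follows essentially the same route as the paper's proof: cancel $j$ against $1/j$, reindex, apply Vandermonde for the first moment; then for the second moment split off an extra linear factor, absorb it into one of the binomials, apply Vandermonde again, and combine the two resulting closed forms by converting $\binom{(k+2)n-4}{n-2}$ into $\frac{n-1}{(k+2)n-3}\binom{(k+2)n-3}{n-1}$. The paper chooses to absorb via $j\binom{(k+1)n-2}{j}=((k+1)n-2)\binom{(k+1)n-3}{j-1}$ rather than via $\binom{n-1}{i}$, but as you note either choice works and yields the same intermediate $((k+1)n-2)\binom{(k+2)n-4}{n-2}$ (up to the obvious equality), after which the algebraic consolidation to $\frac{(k+1)n^{2}-n-1}{(k+2)n-3}\binom{(k+2)n-3}{n-1}$ is exactly as you describe.
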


\begin{proof}
First, observe that setting $p=(k+1)n-2$ and $q=r=n-1$ in Vandermonde's identity yields 
\begin{equation}
\sum_{j=0}^{n-1}{(k+1)n-2 \choose j}{n-1 \choose j}={(k+2)n-3 \choose n-1}.\label{e-vspec}
\end{equation}
We also have
\begin{align}
\sum_{j=0}^{n-1}j{(k+1)n-2 \choose j}{n-1 \choose j} & =((k+1)n-2)\sum_{j=0}^{n-1}{(k+1)n-3 \choose j-1}{n-1 \choose j}\nonumber \\
 & =((k+1)n-2)\sum_{j=0}^{n-2}{(k+1)n-3 \choose j}{n-1 \choose j+1}\nonumber \\
 & =((k+1)n-2)\sum_{j=0}^{n-2}{(k+1)n-3 \choose j}{n-1 \choose n-j-2}\nonumber \\
 & =((k+1)n-2){(k+2)n-4 \choose n-2}\label{e-vspec2}
\end{align}
where the last equality is Vandermonde's identity at $p=(k+1)n-3$, $q=n-1$, and $r=n-2$. Using (\ref{e-vspec}), we obtain
\begin{align*}
\sum_{j=1}^{n}j\acute{f}_{k,n,j} & =\sum_{j=1}^{n}{(k+1)n-2 \choose j-1}{n-1 \choose j-1}\\
 & =\sum_{j=0}^{n-1}{(k+1)n-2 \choose j}{n-1 \choose j}\\
 & ={(k+2)n-3 \choose n-1},
\end{align*}
and using both (\ref{e-vspec}) and (\ref{e-vspec2}), we have {\allowdisplaybreaks
\begin{align*}
\sum_{j=1}^{n}j^{2}\acute{f}_{k,n,j} & =\sum_{j=1}^{n}j{(k+1)n-2 \choose j-1}{n-1 \choose j-1}\\
 & =\sum_{j=0}^{n-1}(j+1){(k+1)n-2 \choose j}{n-1 \choose j}\\
 & =\sum_{j=0}^{n-1}j{(k+1)n-2 \choose j}{n-1 \choose j}+\sum_{j=0}^{n-1}{(k+1)n-2 \choose j}{n-1 \choose j}\\
 & =((k+1)n-2){(k+2)n-4 \choose n-2}+{(k+2)n-3 \choose n-1}\\
 & =\frac{(k+1)n^{2}-n-1}{(k+2)n-3}{(k+2)n-3 \choose n-1}.\qedhere
\end{align*}
}
\end{proof}

Lemma \ref{l-lascsums} and Theorem \ref{t-kbox} lead immediately to the following.

\begin{prop}
\label{p-lascstats}
Among $k$-box paths of size $n$, the average number of long ascents is
\[
\frac{n((k+1)n-1)}{(k+2)n-2}
\]
and the variance in the number of long ascents is 
\[
{\displaystyle \frac{((k+1)n-1)((k+1)n-2)n(n-1)}{((k+2)n-2)^{2}((k+2)n-3)}}.
\]
\end{prop}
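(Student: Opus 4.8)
The plan is to combine the moment sums from Lemma~\ref{l-lascsums} with the count $f_{k,n}$ from Theorem~\ref{t-kbox} in the standard way: the average equals the first moment divided by $f_{k,n}$, and the variance equals the second moment divided by $f_{k,n}$ minus the square of the average. All of the combinatorial content is already packaged into those two results, so what remains is bookkeeping with binomial coefficients together with one short factoring step.

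First I would compute the average. Writing $m\coloneqq(k+2)n$ to lighten the notation, the average is
\[
\frac{\sum_{j=1}^{n}j\acute{f}_{k,n,j}}{f_{k,n}}=\binom{m-3}{n-1}(m-1)\Big/\binom{m-1}{n}.
\]
The ratio of binomials telescopes after cancelling factorials, giving $\binom{m-3}{n-1}/\binom{m-1}{n}=n(m-n-1)/((m-1)(m-2))$, so the average collapses to $n(m-n-1)/(m-2)$. Substituting $m-n-1=(k+1)n-1$ and $m-2=(k+2)n-2$ recovers the claimed formula $n((k+1)n-1)/((k+2)n-2)$.

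For the variance I would first observe that $\binom{m-3}{n-1}/f_{k,n}$ is exactly the average just computed, so the second moment is
\[
\frac{\sum_{j=1}^{n}j^{2}\acute{f}_{k,n,j}}{f_{k,n}}=\frac{(k+1)n^{2}-n-1}{(k+2)n-3}\cdot\frac{n((k+1)n-1)}{(k+2)n-2}.
\]
Factoring out the common $n((k+1)n-1)/((k+2)n-2)$, the variance becomes this factor times the bracketed difference
\[
\frac{(k+1)n^{2}-n-1}{(k+2)n-3}-\frac{n((k+1)n-1)}{(k+2)n-2}.
\]
Combining over the common denominator $((k+2)n-3)((k+2)n-2)$, the numerator expands to $(k+1)n^{2}-(k+3)n+2$. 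The one genuine step here is recognizing that this quadratic factors as $(n-1)((k+1)n-2)$ (it vanishes at $n=1$), after which one factor of $(k+2)n-2$ cancels and the whole expression collapses to the stated variance.

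The main obstacle, such as it is, lies entirely in the algebra: keeping the binomial-ratio cancellations and the polynomial expansion error-free, and spotting the factorization $(k+1)n^{2}-(k+3)n+2=(n-1)((k+1)n-2)$ that makes everything simplify. There is no conceptual difficulty, which is precisely why the result follows ``immediately'' from Lemma~\ref{l-lascsums} and Theorem~\ref{t-kbox}; indeed, one could equally well feed the two moment sums and $f_{k,n}$ to a computer algebra system, as the authors do elsewhere for the returns statistic.
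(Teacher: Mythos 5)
Your proposal is correct and takes exactly the route the paper intends: the paper's entire proof is the remark that Lemma~\ref{l-lascsums} and Theorem~\ref{t-kbox} ``lead immediately'' to the result, and you simply carry out that routine computation (average $=$ first moment over $f_{k,n}$, variance $=$ second moment over $f_{k,n}$ minus the square of the average), with all displayed intermediate formulas checking out, including the key factorization $(k+1)n^{2}-(k+3)n+2=(n-1)((k+1)n-2)$. One small narration slip: no factor of $(k+2)n-2$ actually cancels at the end\textemdash the copy coming from the average and the copy coming from the bracketed difference both survive, which is precisely what produces the $((k+2)n-2)^{2}$ in the stated denominator\textemdash but your final expression is the correct one, so this does not affect the validity of the argument.
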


For large $n$, the expected number of long ascents is approximately $(k+1)n/(k+2)$ and the variance is approximately $(k+1)^{2}n/(k+2)^{3}$.

Returning to the connection between $k$-box paths and $k_{t}$-Dyck paths, let $\mu$ be a $k$-box path and $\mu^{\prime}$ the associated $(k+1)_{k}$-Dyck path under the bijection from Section \ref{s-selkirk}. If the final ascent of $\mu$ is long, then the number of long ascents of $\mu$ is one more than the number of peaks of $\mu^{\prime}$. Otherwise, if the final ascent of $\mu$ has length 1, then the number of long ascents of $\mu$ is equal to the number of peaks of $\mu^{\prime}$. Selkirk gives mean and variance formulas for the number of peaks in $k_{t}$-Dyck paths, which does not translate directly to Proposition \ref{p-lascstats} but come close, due to the slight difference between peaks in $(k+1)_{k}$-Dyck paths and long ascents in $k$-box paths.

\vspace{10bp}

\noindent \textbf{Acknowledgments.} The work in this paper was initiated as part of the first author's honors thesis project at Davidson College (under the supervision of the second author); we thank Heather Smith Blake, Claire Merriman, Jonad Pulaj, and Carl Yerger for serving on the first author's thesis committee and giving feedback on his work. We also thank Ira Gessel for pointing out the applicability of Vandermonde's identity in proving Lemma \ref{l-lascsums}, Sarah Selkirk for her generous feedback on an earlier draft of this manuscript, and an anonymous referee for their suggestions. The second author was partially supported by an AMS-Simons Travel Grant and NSF grant DMS-2316181.

\bibliographystyle{plain}
\addcontentsline{toc}{section}{\refname}\bibliography{bibliography}

\end{document}